\newtheorem{thm}{Theorem}[section]
\newtheorem{lma}[thm]{Lemma}
\theoremstyle{definition}
\newtheorem{defn}[thm]{Definition}
\newtheorem{conj}[thm]{Conjecture}
\newtheorem{ques}[thm]{Question}
\newtheorem{eg}[thm]{Example}
\newcommand{\R}{\mathbb{R}}
\renewcommand{\P}{\mathbb{P}}
\newcommand{\N}{\mathbb{N}}
\newcommand{\SL}{\mathrm{SL}(2,\mathbb{R})}
\providecommand{\norm}[1]{\lVert#1\rVert}
\newcommand{\I}{\mathcal{I}}
\newcommand{\A}{\mathcal{A}}
\renewcommand{\i}{\mathtt{i}}
\renewcommand{\j}{\mathtt{j}}
\newcommand{\hd}{\dim_\textup{H}}
\newcommand{\ld}{\dim_\textup{LY}}
\newcommand{\locd}{\dim_\textup{loc}}
\title{Self-projective sets}
\author{Argyrios Christodoulou} \address{Department of Mathematics, Aristotle University of Thessaloniki, Greece, 54124} 
\email{argyriac@math.auth.gr}
\author{Natalia Jurga} \address{Mathematical Institute, University of St Andrews, Scotland, KY16 9SS}
\email{naj1@st-andrews.ac.uk}
\begin{document}

 \maketitle

\begin{abstract}
Self-projective sets are natural fractal sets which describe the action of a semigroup of matrices on projective space. In recent years there has been growing interest in studying the dimension theory of self-projective sets, as well as progress in the understanding of closely related objects such as Furstenberg measures. The aim of this survey is twofold: first to motivate the study of these objects from several different perspectives and second to make the study of these objects more accessible for readers with expertise in iterated function systems.
\end{abstract}

\section{Introduction}

Self-projective sets are natural fractal sets which describe the action of a semigroup of matrices $G \subset \mathrm{SL}(d,\R)$ on $d-1$ dimensional projective space $\R\P^{d-1}$, which throughout the article is equipped with the metric induced by angles. In recent years there has been growing interest in studying the dimension theory of self-projective sets, as well as progress in the understanding of closely related objects such as Furstenberg measures. 

Self-projective sets are interesting from a variety of different perspectives and as such, their study has been scattered across literature coming from several different fields, such as complex analysis and hyperbolic geometry, fractal geometry, random matrix products and linear cocycles. The first aim of this survey is to collect these different perspectives together and to give an insight into the motivation for the study of these sets in different fields as well as concrete examples of interest.

From the point of view of fractal geometry, self-projective sets resemble attractors of iterated function systems. While self-projective sets cannot be studied through this lens directly (since they are limit sets of homeomorphisms of a compact space, hence not contractions) nevertheless they share many features with attractors of iterated function systems and indeed similar tools may be used to study them. Thus the second aim of this survey is to make self-projective sets more accessible to those with expertise in iterated function systems and inspire further work in this area by gathering some open problems. We will assume that the reader is familiar with basic notions of dimension theory, such as the Hausdorff dimension. Moreover, some expertise in the literature on self-similar sets and measures will be useful when reading this survey, especially when reading \S \ref{section:alg}.

Let $G \subset \mathrm{SL}(d,\R)$ be the semigroup generated by a set $\A \subset  \mathrm{SL}(d,\R)$. The following definition makes sense whether $\A$ is finite or infinite, however we will mostly be considering the finite case throughout this survey. In order to define self-projective sets we need to consider $\R G = \{kA\colon k\in\R\:\text{and}\:A\in G\}$ and its closure $\overline{\R G}$ in the set of $d\times d$ matrices, with respect to the usual topology. 

\begin{defn}
The limit set $\Lambda_\A$ of $\A$ (equiv. $G$) is the projection in $\R\P^{d-1}$ of the set
\[
\{\mathrm{Im}\pi \colon \pi\in \overline{\R G} \:\text{and}\:\mathrm{rank}(\pi)=1\}.
\]
\end{defn}

In principle, $\Lambda_\A$ can be the empty set. However, if we assume that $G$ contains a proximal element, that is, an element $A \in G$ with a simple leading eigenvalue $\lambda$, then $\lim_{n\to\infty}\lambda^{-n}A^n$ converges to an endomorphism of $\R^d$ with rank one. Hence, $\Lambda_\A$ is non-empty.\\
If, in addition, we assume that $G$ is irreducible, that is, there is no proper linear subspace $L \subset \R^d$ which is preserved by all elements in $G$, then $\Lambda_\A$ is the unique closed, $G$-invariant, subset of $\R\P^{d-1}$ whose $G$-orbits are dense in itself\footnote{Closed sets with dense orbits are often called minimal.} (see \cite[Lemma 4.2]{BQ}).

So, for the rest of the article, we shall assume that $G$ is an irreducible semigroup that contains proximal elements. In \S\ref{higher} we are going to show that under these assumptions the limit set has the following simple form, where the attracting fixed point of a proximal matrix is its fixed point in $\R\P^{d-1}$ that corresponds to its leading eigenvalue.

\begin{thm}\label{ls=attr}
If $G$ is an irreducible semigroup that contains a proximal matrix, then its limit set is the closure (in $\R\P^{d-1}$) of set of all attracting fixed points of proximal elements of $G$.
\end{thm}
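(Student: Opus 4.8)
The plan is to prove the two inclusions separately. Write $F$ for the closure in $\R\P^{d-1}$ of the set of attracting fixed points of proximal elements of $G$, and write $x_A^+$ for the attracting fixed point of a proximal element $A\in G$.

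First I would establish the easy inclusion $F\subseteq\Lambda_\A$. If $A\in G$ is proximal with simple leading eigenvalue $\lambda$ and leading eigenvector $v_A$, then the scaled powers $\lambda^{-n}A^n$ lie in $\R G$ and, as recalled in the introduction, converge to a rank-one map $\pi_A\in\overline{\R G}$; concretely $\pi_A$ is the map $u\mapsto\phi_A(u)\,v_A$ for a linear functional $\phi_A$ whose kernel is the repelling hyperplane $W_A$ of $A$, normalised so $\phi_A(v_A)=1$. In particular $\mathrm{Im}\,\pi_A=\R v_A$, so $x_A^+=[\mathrm{Im}\,\pi_A]\in\Lambda_\A$. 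Since $\Lambda_\A$ is closed, $F\subseteq\Lambda_\A$.

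For the reverse inclusion I would reduce everything to $G$-invariance of $F$. As $G$ contains a proximal element, $F\neq\emptyset$, and $F$ is a closed subset of the compact space $\R\P^{d-1}$. Granting that $F$ is $G$-invariant, a standard Zorn's lemma argument (the intersection of a descending chain of nonempty closed $G$-invariant subsets is again nonempty by compactness, closed, and $G$-invariant) produces a minimal nonempty closed $G$-invariant set $M\subseteq F$. For any $x\in M$ the orbit closure $\overline{Gx}$ is nonempty, closed, $G$-invariant and contained in $M$, so by minimality $\overline{Gx}=M$; thus $M$ has dense orbits. By the uniqueness recalled in the introduction via \cite[Lemma 4.2]{BQ}, $M=\Lambda_\A$, whence $\Lambda_\A\subseteq F$. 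Combined with the first inclusion this gives $F=\Lambda_\A$.

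It remains to prove invariance, which is the heart of the matter: it suffices to show $g\cdot x_A^+\in F$ for every $g\in G$ and proximal $A$, since then $g\,F=g\,\overline{\{x_A^+\}}\subseteq\overline{g\{x_A^+\}}\subseteq F$ by continuity of $g$. The key computation is that $\lambda^{-n}gA^n\to g\pi_A$, a rank-one map with image $\R(g v_A)$ and single nonzero eigenvalue $\phi_A(g v_A)$ on the eigenline $\R(g v_A)$. When $g\cdot x_A^+\notin W_A$, i.e.\ $\phi_A(g v_A)\neq 0$, continuity of the spectrum shows that $gA^n\in G$ is proximal for large $n$ with $x_{gA^n}^+\to g\cdot x_A^+$, so $g\cdot x_A^+\in F$. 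The delicate degenerate case is $g\cdot x_A^+\in W_A$, and this is exactly where irreducibility must enter and where I expect the main obstacle to lie. Here I would approximate $A$ from within $G$ by proximal elements $B=A^N h$ (with $h\in G$, $N$ large), which satisfy $x_B^+\to x_A^+$ and repelling hyperplane tending to $h^{-1}W_A$ provided $h v_A\notin W_A$; applying the nondegenerate case to the pair $(g,B)$ requires, in the limit, that $h\cdot(g v_A)\notin W_A$. Since $v_A\notin W_A$ while $g v_A\in W_A$, the vectors $v_A$ and $g v_A$ are linearly independent, and irreducibility (through $\mathrm{span}(G u)=\R^d$ for every $u\neq 0$, together with a genericity argument of Benoist--Quint type) yields an $h\in G$ satisfying both transversality conditions simultaneously. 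Passing to the limit $N\to\infty$ and using that $F$ is closed then gives $g\cdot x_A^+\in F$, completing the proof of invariance and hence of the theorem; securing this simultaneous transversality in the degenerate case is the step I expect to require the most care.
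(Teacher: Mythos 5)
Your strategy genuinely differs from the paper's: you reduce the hard inclusion to $G$-invariance of the closure $F$ of the attracting fixed points and then invoke the uniqueness of minimal sets from \cite[Lemma 4.2]{BQ}, whereas the paper proves density of attracting fixed points in $\Lambda_\A$ directly, writing $\pi=\lim r_nA_n$ and showing that the ``sandwiched'' elements $A_nAA_n$ map a small convex-cone neighbourhood of $[\mathrm{Im}\,\pi]$ into its interior, hence are proximal by the Krein--Rutman theorem. Your easy inclusion, your Zorn/minimality reduction, and the nondegenerate case of your invariance step are all correct. But the degenerate case has a genuine gap: you need $h\in G$ with $hv_A\notin W_A$ \emph{and} $h(gv_A)\notin W_A$ simultaneously, and irreducibility plus proximality do not supply such an $h$. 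Concretely, in $\SL$ let $A=\begin{pmatrix}2&0\\0&1/2\end{pmatrix}$, $g=\begin{pmatrix}0&-1\\1&0\end{pmatrix}$, and let $G$ be the semigroup they generate. Then $G$ is irreducible ($g$ has no real eigendirection and swaps the two invariant lines of $A$) and contains the proximal element $A$; here $v_A=e_1$, $W_A=\R e_2$ and $gv_A=e_2\in W_A$, so this pair is exactly your degenerate case. Since $g$ normalises the diagonal matrices and $g^2=-I$, every element of $G$ is either diagonal or antidiagonal: diagonal elements send $gv_A=e_2$ into $W_A$, while antidiagonal ones send $v_A=e_1$ into $W_A$. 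Hence no admissible $h$ exists and your construction $B=A^Nh$ cannot even be started. (The theorem itself holds here: $\Lambda_\A=F=\{[e_1],[e_2]\}$, with $[e_2]$ the attracting fixed point of $gAg=\mathrm{diag}(-1/2,-2)\in G$.) The underlying issue is that your ``genericity argument of Benoist--Quint type'' is really a strong irreducibility statement; under mere irreducibility a finite union of subspaces such as $\R e_1\cup\R e_2$ can be invariant, and that is precisely the enemy.

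The repair is cheap and brings you in line with the paper, which also only ever needs a \emph{single} transversality condition (an element of $G$ not mapping $\mathrm{Im}\,\pi$ into $\mathrm{Ker}\,\pi$). Skip the intermediate proximal element $B$ entirely and consider $gA^Nh$ directly: $\lambda^{-N}gA^Nh\to g\pi_Ah$, which is the rank-one map $u\mapsto\phi_A(hu)\,gv_A$ with image $\R(gv_A)$ and trace $\phi_A(h\,gv_A)$. Irreducibility alone yields $h\in G$ with $\phi_A(h\,gv_A)\neq0$: the subspace $\mathrm{span}(G\cdot gv_A)$ is nonzero and $G$-invariant, hence equals $\R^d$, so $G\cdot gv_A\not\subset W_A$. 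For such an $h$ the limit map has the simple nonzero leading eigenvalue $\phi_A(h\,gv_A)$ with eigenvector $gv_A$, so by continuity of the spectrum (exactly as in your nondegenerate case) $gA^Nh\in G$ is proximal for all large $N$, with attracting fixed point converging to $g\cdot x_A^+$. Since $F$ is closed, $g\cdot x_A^+\in F$; this one-condition argument requires no case distinction at all and completes your invariance step, and with it your proof.
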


A notion of limit set can still be defined in a similar way when $\overline{\R G}$ does not contain any endomorphisms of rank one. In this case, however, it may not have any of the useful properties mentioned above; for example, it may not be the unique minimal set. For precise statements see the discussion in \cite[Section 4.1]{BQ}. It is also worth mentioning that this more general notion could be used to define the limit set of the semigroup $G^{-1}=\{A^{-1}\colon A\in G\}$, which acts as a \emph{repeller} for $G$ (i.e. an attractor of its backward orbits). The interplay between the limit set and the repeller can yield insights into the dynamical behaviour of $G$ (see, for example, \cite[Section 15]{JS}), but is beyond the scope of this survey.

\subsection{Limit sets of Fuchsian semigroups}
Possibly the simplest occurrence of limit sets arises from the action of $\mathrm{SL}(2,\R)$ on the real projective line $\R\P^1$. This case is special in the sense that the study of self-projective sets heavily overlaps with another well-developed field of research; Fuchsian groups.

To see the connection between the two one only needs realise $\R\P^1$ as the extended real line $\overline{\R}=\R\cup\{\infty\}$, where infinity here is simply the north pole of the Riemann sphere. In this setting, the projective map induced by the action of a matrix 
\[
A=\begin{pmatrix}
a & b\\
c & d
\end{pmatrix}\in\mathrm{SL}(2,\R)
\]
on $\R\P^1$ is the M\"obius transformation $f_A(z)=\frac{az+b}{cz+d}$. So, a set of matrices $\A\subset\mathrm{SL}(2,\R)$ can be thought of as set of M\"obius transformations from $\mathrm{PSL}(2,\R)$.

The study of the dynamical properties of semigroups of $\mathrm{PSL}(2,\R)$ was initiated by Hinkkanen and Martin \cite{hinkkanen-martin} as a special case in the theory of rational semigroups, and further developed by Fried, Marotta and Stankiewitz \cite{FrMaSt}, and Jacques and Short \cite{JS}. The main motivation behind the results in these last two articles is the rich theory of Fuchsian groups; discrete subgroups of $\mathrm{PSL}(2,\R)$ \cite{beardon,katok, marden}.

Limit sets of semigroups of $\mathrm{PSL}(2,\R)$ are typically defined through the action of M\"obius maps on two-dimensional hyperbolic space. But the work of the authors mentioned above showed that, under our standing assumptions (translated to the setting of M\"obius maps), this definition coincides with the one presented in this article (see \cite[Section 6]{JS} for more information). This connection paved the way for the introduction of techniques from hyperbolic geometry into the theory of self-projective sets \cite{cj} and two-dimensional linear cocycles \cite{c}.

\subsection{Rauzy gasket}

The Rauzy gasket is a well-known fractal which can be obtained as the limit set of a projective system of three matrices in $\mathrm{SL}(3,\R)$. It was discovered independently three times and since then there has been substantial work towards establishing that it has zero Lebesgue measure and toward estimating its Hausdorff dimension. Its numerous applications motivate a more comprehensive investigation into the study of projective systems associated to subsets of $\mathrm{SL}(3,\R)$.

\begin{figure}[h!]
\centering
\includegraphics[width=5cm]{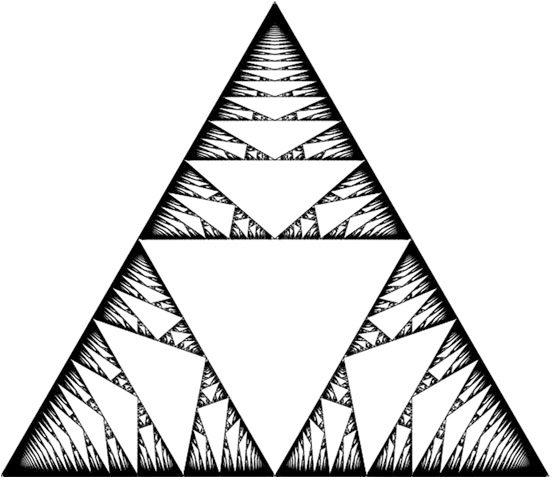}
\caption{The Rauzy gasket contained in the standard 2-simplex $\Delta$}
\end{figure}

The matrices which generate the projective system for the Rauzy gasket are
\begin{align}
\A=\left\{ \begin{pmatrix} 1& 1&1 \\ 0 & 1&0\\ 0&0&1\end{pmatrix}, \begin{pmatrix} 1& 0&0 \\ 1 & 1&1\\ 0&0&1\end{pmatrix}, \begin{pmatrix} 1& 0&0 \\ 0 & 1&0\\ 1&1&1\end{pmatrix}\right\}. \label{rauzy}
\end{align}
Since the matrices are non-negative, $\Lambda_\A$ is contained in the closed cone $K \subset \R\P^2$ of non-negative directions and therefore the projective system induced by $\A$ is actually an IFS on $K$. 

Typically, the Rauzy gasket is defined to be the attractor $R$ of the IFS $\{f_1,f_2,f_3: \Delta \to \Delta\}$ induced by $\A$ on the standard 2-simplex $\Delta=\{(x,y,z): x,y,z \geq 0, \; x+y+z=1\}$ where
\begin{align*}
f_1(x,y,z)&=\left(\frac{1}{2-x}, \frac{y}{2-x}, \frac{z}{2-x}\right)\\
f_2(x,y,z)&=\left(\frac{x}{2-y}, \frac{1}{2-y}, \frac{z}{2-y}\right)\\
f_3(x,y,z)&=\left(\frac{x}{2-z}, \frac{y}{2-z}, \frac{1}{2-z}\right).
\end{align*}
It is easy to see that $R$ is a bi-Lipschitz image of $\Lambda_\A$.

The Rauzy gasket has a long and interesting history. It first appeared in work of Arnoux and Rauzy in the context of interval exchange transformations in \cite{ar}, where it was conjectured to have zero Lebesgue measure. The gasket was rediscovered by Levitt \cite{levitt} where it was associated with the simplest example of pseudogroups of rotations, and the paper included a proof (accredited to Yoccoz) that the set has zero Lebesgue measure. Later, the gasket was reintroduced in the work of Dynnikov and De Leo \cite{dd}  in connection with Novikov’s problem of plane sections of triply periodic surfaces. Here an alternative proof was given that the gasket has zero Lebesgue measure, and it was conjectured that the Hausdorff dimension should be strictly less than 2. This was rigorously established by Avila, Hubert and Skripchenko \cite{ahs}. Since then there have been a number of further papers estimating the Hausdorff dimension of the gasket \cite{fougeron, sewell,grm} and very recently the Hausdorff dimension of the Rauzy gasket was established independently by the second named author \cite{rauzyme} and Jiao, Li, Pan and Xu \cite{jiao2023dimension, li2023dimension}, see \S \ref{dimhigher}. In an earlier edition of these proceedings \cite{as} the Rauzy gasket was studied again, this time as a subset of the standard 2-dimensional simplex associated with letter frequencies of ternary episturmian words. Recently Gamburd, Magee and Ronan \cite{gmr} showed asymptotic estimates for the number of integer solutions of the Markov-Hurwitz equations, which featured the dimension of the Rauzy gasket.

\subsection{Connection to the Furstenberg measure}

Here we discuss the connection between the limit set and a closely related object, the Furstenberg measure.

Let $\A=\{A_1, \ldots, A_N\} \subset\mathrm{SL}(d,\R)$ generate a proximal and strongly irreducible semigroup $G$, where by strongly irreducible we mean that there is no finite union of proper linear subspaces of $\R^d$ which is preserved by all elements in $G$, and by proximal we mean that $G$ contains a proximal element. Let $\mu$ be a measure on $\A$ which gives positive mass to each atom. So $\mu= \sum_{i=1}^N p_i \delta_{A_i}$ where $p_i>0$ for each $i$ and $p_1 + \cdots +p_N=1$. By the work of Furstenberg there exists a unique $\mu$-stationary measure $\nu$ on $\R\P^{d-1}$:
\begin{equation}
\nu=\sum_{i=1}^N p_i A_i \nu,
\label{stationary}
\end{equation}
where $A_i\nu$ denotes the pushforward of $\nu$ by $A_i$, and $\nu$ is non-atomic. The unique stationary measure \eqref{stationary} is called the Furstenberg measure. 

The Furstenberg measure is an important object to study in many fields. It is crucial for studying the asymptotic properties of random matrix products, such as the typical growth of vectors under random matrix products (Lyapunov exponents) \cite{BQ}. In quantum mechanics, the Furstenberg measure is important in the study of random Schr\"odinger operators \cite{BQ}, for example the smoothness properties of the Furstenberg measure is key to determining the regularity of the integrated density of states for the Anderson-Bernoulli model of random Schr\"odinger operators \cite{bourgain}. The Furstenberg measure is also a central ingredient in the dimension theory of self-affine sets and measures \cite{bhr,hr}.

The support of the Furstenberg measure $\nu$ is the limit set $\Lambda_\A$ \cite[Lemma 4.38]{BQ} therefore it is natural to study both the Furstenberg measure and the self-projective sets which they are supported on concurrently, for example the dimension theory of the Furstenberg measure is crucial for obtaining lower bounds on the Hausdorff dimension of self-projective sets. Indeed, recent progress on the dimension theory of the Furstenberg measure, beginning with \cite{hochman-sol} has enabled work on self-projective sets to begin in earnest, beginning with \cite{sol-tak}.

\section{Self-projective sets for subsets of $\SL$}

In the study of iterated function systems (IFS), the analysis of the attractor largely depends on three key properties of the IFS: 
\begin{itemize}
\item conformality/ non-conformality: whether the (maps in the) IFS are conformal or not,
\item contraction properties: whether the maps are uniformly contracting, have parabolic or critical points etc.,
\item separation/algebraic properties: how much overlap there is between copies of the attractor which make up the whole, whether there are any algebraic relations between distinct compositions of maps from the IFS.
\end{itemize}

\subsection{Conformality/ non-conformality.} When all of the maps in the IFS are conformal, the analysis of the attractor is considerably simpler, since it is made up of copies of the whole which, locally, have been distorted by the same amount in every direction. The simplest examples of attractors of conformal IFSs are self-similar sets, the attractors of an IFS composed of similarity contractions. In this section, we will be within the conformal regime, since the self-projective sets will be subsets of $\R\P^1 \cong S^1$ (any map on a one-dimensional space must necessarily be conformal). In fact, by associating matrices with their M\"obius action, self-similar sets in $\R$ can be seen as a special case of self-projective sets for subsets of $\SL$. In particular, the attractor of the self-similar iterated function system 
$$\{f_i(x)=r_ix+t_i\}_{\R \mapsto \R}$$
with $|r_i|<1$ and $t_i \in \R$ is a self-similar set which is (a bi-Lipschitz copy of) the self-projective set associated to the projective system
\begin{align}
\left\{ \begin{pmatrix} \sqrt{r_i}& \frac{t_i}{\sqrt{r_i}} \\ 0 & \frac{1}{\sqrt{r_i}}\end{pmatrix}\right\}, \label{commonrep}
\end{align}
which is a set of hyperbolic matrices with a common repelling fixed point. In \S \ref{higher} we will consider self-projective sets in higher dimensional settings which will introduce non-conformality into the picture.

\subsection{Contraction properties}

The best understood iterated function systems are those which are comprised of uniform contractions, sometimes called \emph{uniformly hyperbolic} iterated function systems. Mauldin and Urbanski \cite{mu} initiated the study of parabolic IFSs, IFSs which contain maps with a neutral fixed point, and studied them using a family of techniques known as \emph{inducing techniques}. This involves introducing an ``accelerated'' version of the IFS, which both captures the important properties of the attractor (such as its Hausdorff dimension) and has better properties than the original IFS (by being uniformly hyperbolic).

 While a priori self-projective sets cannot be associated to an IFS of contractions, we will see that there are important classes of self-projective sets which are analogous to the uniformly hyperbolic and parabolic IFSs. 

We begin by classifying matrices in $\SL$ with respect to their action on the projective line $\R\P^1$. Let $A\in\SL$ be a non-identity matrix. With slight abuse of notation, we will also use $A$ to denote the induced projective map $A\colon \R\P^1\to\R\P^1$. We say that $A$ is \emph{hyperbolic} if $A$ has two fixed points in $\R\P^1$, \emph{parabolic} if it has a unique fixed point in $\R\P^1$ and \emph{elliptic} otherwise.

Suppose $A$ is a hyperbolic matrix and let $\alpha_A,\beta_A\in\R\P^1$ denote its fixed points. We call $\alpha_A$ the \emph{attracting} fixed point of $A$ if it has the following property: if $x\in\R\P^1\setminus\{\beta_A\}$, then $A^n(x)\xrightarrow{n\to\infty}\alpha_A$; the other fixed point $\beta_A$ is called \emph{repelling}. This is equivalent to saying that $A$ maps a cone of $\R^2$ around its leading eigenvector compactly inside itself. 

So, semigroups generated by a single hyperbolic matrix define a uniformly contracting IFS on a part of the projective line, and in fact they are the only cyclic semigroups of $\SL$ with this property. In order to find the analogue of this behaviour for more general IFSs we require the following definition. Let $\A^n$ denote all possible products of $n$ matrices from $\A$.

\begin{defn}[Uniform hyperbolicity]
We say that $\A\subset\SL$ is uniformly hyperbolic if there exist $\lambda>1$ and $c>0$ such that for all $A \in \A^n$, $\norm{A} \geq c\lambda^n$.
\end{defn}

The simplest example of a uniformly hyperbolic set is one where all the matrices of $\A$ have strictly positive entries. Moreover, it is straightforward to see that if $\A$ is uniformly hyperbolic then the semigroup it generates contains only hyperbolic matrices. The converse of this last statement, however, is not true.

Uniformly hyperbolic sets were thoroughly studied by Avila, Bochi and Yoccoz \cite{aby} in order to decipher the properties of two-dimensional linear cocycles. There, they proved that $\A$ is uniformly hyperbolic if and only if there exists a finite union of open, connected subsets of $\R\P^1$ (called a \emph{multicone}) that is mapped compactly inside itself for all $A\in\A$ \cite[Theorem 2.2]{aby}. Thus, a uniformly hyperbolic set is an IFS which is uniformly contracting on a multicone. These types of IFSs will be the main focus of this section.

On the other end of the spectrum we have the following systems. 

\begin{defn}[Ellipticity]
We say that $\A$ is elliptic if the semigroup $G$ it generates contains an elliptic matrix or the identity.
\end{defn}

From our earlier comment we can see that if $\A$ is elliptic then it cannot be uniformly hyperbolic, and vice versa. Also, since elliptic matrices are rotations of the projective line, it is clear that an elliptic set of matrices induces an IFSs which exhibits no contracting properties. Later on we are going to present a more elaborate statement about the interaction between uniformly hyperbolic and elliptic sets.

The middling case between uniform hyperbolicity and ellipticity is the following, which was introduced by Jacques and Short \cite{JS}.

\begin{defn}[Semidiscrete]
We say that $\A$ is semidiscrete if the closure of the semigroup $\overline{G}$ it generates (in the usual topology) does not contain the identity.
\end{defn}

A reader familiar with Kleinian groups will notice that this definition is similar to the one typically used to define discrete \emph{subgroups} of $\SL$. However, the fact that we are working with semigroups instead drastically changes the implications of this definition. As a striking example we mention that semigroups generated by semidiscrete sets are not always discrete \cite[Section 3]{JS}.

Observe that if $\A$ is uniformly hyperbolic then it is semidiscrete, whereas an elliptic $\A$ is not. This justifies our description of semidiscreteness as the in-between case. We emphasise, however, that not every semidiscrete set is uniformly hyperbolic. There are various examples to see this (take a semidiscrete set which generates a non-discrete semigroup), but the simplest is to see that semidiscrete sets are allowed to contain parabolic matrices.

The action of semidiscrete sets on the projective line is difficult to describe. It can be shown that if $\A$ is semidiscrete, then there exists an open subset $D$ of $\R\P^1$ that is mapped inside itself by the projective action of $\A$. In most cases, $D$ will be mapped strictly inside itself (but not necessarily compactly) and so semidiscreteness can be thought of as ``non-uniform" contraction on a part of $\R\P^1$. But, there are examples of semidiscrete sets whose projective action fixes an open set and does not map any open set strictly inside itself. For a thorough analysis of the projective action of semidiscrete sets we refer to \cite[Section 4]{c}.

These three sets we defined do not exhaust all possibilities for subset of $\SL$, but cover most of them. In order to properly see how these definitions interact, we make the following shift in perspective: Fix $N\in\N$ and consider the parameter space $\SL^N$ with the product topology. The above definitions carry over to $\SL^N$ in the obvious way. That is, an $N$-tuple $\A\in\SL^N$ is uniformly hyperbolic if $\A$ is uniformly hyperbolic as a set of $N$ matrices, and similarly for elliptic and semidiscrete points of $\SL^N$.

This allows us to define the sets $\mathcal{H}$, of all uniformly hyperbolic $N$-tuples, $\mathcal{E}$ of all elliptic $N$-tuples and $\mathcal{S}$ of all semidiscrete $N$-tuples. The locus $\mathcal{H}$ is open but not connected, $\mathcal{E}$ is open and connected (\cite[Proposition A.3]{aby}) whereas $\mathcal{S}$ is neither closed, nor open and is not connected. Yoccoz (with a proof credited to Avila) \cite[Proposition 6]{yoccoz} showed that the complement of $\mathcal{H}$ is the closure of $\mathcal{E}$. This leads to the question of whether $\mathcal{H}$ and $\mathcal{E}$ share the same boundary in $\SL^N$. In \cite[Section 3]{aby} this was shown to be true when $N=2$, but \cite[Section 5]{c} showed that it is false for all $N\geq 3$. 

In order to understand the boundary of $\mathcal{H}$ one needs to study the structure of $\mathcal{S}$. This is because the locus $\mathcal{S}$ contains all of $\overline{\mathcal{H}}$, apart from certain ``simple" $N$-tuples \cite[Corollary 1.6]{c}. These exceptions are easy to study and all lie on the boundary of a particular connected component of $\mathcal{H}$. The significance of this result lies in the fact that $\mathcal{S}$ provides the general framework for the study of $N$-tuples which are very difficult to study through the lens of uniform hyperbolicity. These include $N$-tuples lie on the boundary of $\mathcal{H}$, but not on the boundary of any connected components of $\mathcal{H}$ (which certainly exist \cite[Example 6.5]{cthesis}).

This discussion raises several questions concerning the interaction between the hyperbolic and the semidiscrete loci of $\SL^N$. The most prominent is the following.

\begin{ques}\label{cocycle-q}
Can every semidiscrete $\A\in\SL^N$ be approximated by uniformly hyperbolic $N$-tuples?
\end{ques}

This question appeared in \cite[Question 2]{c} and is the latest version of a series of similar questions \cite[Question 4]{aby}, \cite[Section 16]{JS}, \cite[Question 4]{yoccoz}. In \cite[Section 6]{c} Question~\ref{cocycle-q} is split into two parts which suggests a possible route for tackling this problem. The difficulty of Question~\ref{cocycle-q} lies with the fact that it requires one to have good control on the limit set $\Lambda_\A$ when the perturbing the matrices in $\A$.

There are also several open questions about the topology of the connected components of the hyperbolic locus \cite[Section 6]{aby}. For example, it not known whether all connected components of $\mathcal{H}$ are unbounded, or whether the boundaries of two components can intersect.

\subsection{Algebraic properties} \label{section:alg}

 Consider an IFS $\mathcal{F}=\{f_i:X \to X\}_{i \in I}$ on some metric space $X$. Let $I^n$ denote the set of words of length $n$ with digits in $I$ and $I^*$ denote all words of finite length with digits in $I$. Given a finite word $i_1\ldots i_n \in I^n$ we let $f_{i_1\ldots i_n}=f_{i_1} \circ \cdots \circ f_{i_n}$. The best understood IFS are those for which the copies of the attractor which make up the whole are well separated. The most basic type of IFS which violates this is an IFS  $\mathcal{F}=\{f_i:X \to X\}_{i \in I}$ with \emph{exact overlaps}, i.e. with the property that $f_{\i}=f_{\j}$ for some $\i, \j \in I^*$. One of the most important open problems in the theory of iterated function systems is the \emph{exact overlaps conjecture} \cite{simon} for self-similar sets, which establishes the dimension of self-similar sets in the absence of exact overlaps. This conjecture is incredibly hard to solve, although recently it has been established in some special cases \cite{hochman1, varju, rapaportexact}. Typically one attempts to deal with the complications that arise from overlaps by imposing separation conditions on the IFS. For example, the IFS $\mathcal{F}$ satisfies the \emph{strong separation condition} if the points in the attractor are in one-to-one correspondence with points in the symbolic system $I^\N$, which is easier to study. The \emph{(strong) open set condition} is a slightly weaker assumption which allows some limited overlaps, meaning some points may have a non-unique symbolic coding. A substantially weaker separation condition which been popularised since the work of Hochman is the \emph{(strong) exponential separation condition} \cite{hochman1}, which allows for substantial overlaps between `pieces' $\{f_\i(X)\}_{\i \in I^n}$ of the attractor, provided that the pieces do not come too close to overlapping exactly `too fast'. Although a priori separation conditions do not make sense for self-projective sets (each projective map is a homeomorphism of projective space, hence a projective system is overlapping by definition) we will see that there are natural assumptions which can be made on the projective systems which are analogous to some of these separation conditions. 

If $\A$ freely generates the semigroup $G$, this is analogous to an absence of exact overlaps in the theory of iterated function systems. We will see an analogue of the exact overlaps conjecture can be formulated for self-projective sets in Conjecture \ref{conj}.

Since projective maps are homeomorphisms of $\R\P^1$, ``non-overlapping'' separation conditions are a little artifical in the study of self-projective sets. However, for projective systems such as uniformly hyperbolic systems, or more generally semidiscrete systems, which strictly contract a non-trivial subset of $\R\P^1$, analogous versions of these separation conditions can be formulated. Here we introduce a projective version of the open set condition.

\begin{defn}\label{osc}
We say that $\A$ satisfies the \emph{projective (strong) open set condition} if there exists an open set $U \subset \R\P^1$ (which intersects the limit set $\Lambda_\A$) such that $\bigcup_{A \in \A} A \cdot U \subset U$ where the union is disjoint.
\end{defn}

As an example, one can take any self-similar system which satisfies the open set condition (analogous to the above but where $U \subset \R^2$) and turn into a matrix system using \eqref{commonrep}. For example,
$$\left\{ \begin{pmatrix} \frac{\sqrt{2}}{2}&0\\0& \sqrt{2}\end{pmatrix}, \begin{pmatrix} \frac{\sqrt{2}}{2}&\frac{\sqrt{2}}{4}\\0& \sqrt{2}\end{pmatrix}\right\}$$
satisfies the projective open set condition for the open cone of positive directions that lie between the boundary vectors $v_1=(1,1)$ and $v_2=(0,1)$. Here we have transformed the self-similar IFS $\{f_1(x)=x/2, f_2(x)=x/2+1/2\}_{[0,1] \to [0,1]}$, which satisfies the open set condition for the open set $U=(0,1)$, by using \eqref{commonrep}. It is easy to see that  the Rauzy projective system \eqref{rauzy} satisfies the projective strong open set condition for the open set $U \subset \R\P^2$ where $U$ is the set of positive directions.

More generally, the following assumption \cite{hochman-sol} is analogous to the (strong) exponential separation condition, which first appeared in the work of Hochman \cite{hochman1} (note that it was not explicitly named the ``exponential separation condition'' there, but this terminology has been taken on since then, see e.g. \cite[Definition 1.9]{sol-tak}). We write $\A=\{A_1, \ldots, A_N\}$ and $\I=\{1, \ldots,N\}$.

\begin{defn}[Diophantine property]
We say that $\A$ is Diophantine if there exists $c>0$ such that if $\i, \j \in \I^n$ and $A_\i \neq A_\j$, then
$$\norm{A_\i-A_\j} \geq c^n.$$
We say that $\A$ is strongly Diophantine if $\A$ is Diophantine and generates a free semigroup.\footnote{This particular formulation of the Diophantine property was introduced in \cite{hochman-sol}, and the strong Diophantine property was introduced in \cite{sol-tak}. Note that in \cite{cj}, the Diophantine property is used to refer to being strongly Diophantine.}
\end{defn}

Unlike Definition \ref{osc}, this assumption makes sense for any projective system. It is known that when the matrices in $\A=\{A_1, \ldots, A_N\}$ have algebraic entries and generate a free semigroup, $\A$ is strongly Diophantine \cite[Proposition 4.3]{hochman-sol}.  Solomyak and Takahashi \cite[Theorem 1.2]{sol-tak} proved that if $\SL$ is treated as a subset of $\R^{4N}$ then Lebesgue almost all choices of sets of positive matrices $\A \subset \SL$ are strongly Diophantine. For a recent discussion of Diophantine properties in groups see \cite{sol-tak,dp2} and \cite{dp1} which highlights the connection to Diophantine numbers. 

\begin{ques} Until recently, it was not known whether the strong Diophantine property was equivalent to freeness. However, the recent examples \cite{baker1, baker2, bk} have shown that these properties are not equivalent by constructing self-similar iterated function systems (i.e. uniformly hyperbolic projective systems in $\SL$ where all the matrices have a common repelling fixed point, via the connection \eqref{commonrep}) which are free but not strongly Diophantine: they exhibit ``exponential condensation''. How easy is it to construct such examples in the more general self-projective setup?
\end{ques}

\begin{ques}\label{wsc}
Separation conditions such as the weak separation condition \cite{lau-ngai} and the finite type condition \cite{ngai-wang} have been introduced for self-similar sets, which permit special types of exact overlaps. Can such conditions be extended to semigroups of $\SL$?
\end{ques}

\subsection{Dimension of sets and measures}

Let $\hd X$ denote the Hausdorff dimension of a subset $X \subset \R\P^{d-1}$. Given a Borel probability measure $\mu$ on $\R\P^{d-1}$ and $x \in \mathrm{supp} \mu$ we define the local dimension of $\mu$ at $x$, wherever it exists, to be the limit
$$\locd \mu(x)= \lim_{r \to 0} \frac{\log \mu(B(x,r))}{\log r}.$$
If $\locd \mu(x)$ exists and is constant for $\mu$ almost every $x$, we say that $\mu$ is exact dimensional and we call the common value the exact dimension of $\mu$, denoted simply by $\dim \mu$. In this case many other notions of dimension of $\mu$ coincide and equal the exact dimension $\dim \mu$. 

We now describe the state of the art for the dimension theory of both self-projective sets and the Furstenberg measures which are supported on them. Since lower bounds on the Hausdorff dimension of self-projective sets require knowledge of the dimension of measures supported on the set, it is natural to begin with the dimension theory of Furstenberg measures, which has been developed first.

\subsubsection{Dimension of the Furstenberg measure}
Let $\A=\{A_1, \ldots, A_N\} \subset \SL$ generate a proximal and strongly irreducible semigroup $G$. Consider a probability measure $\mu=\sum_{i=1}^N p_i\log \delta_{A_i}$ which is fully supported on $\A$. Let $\nu$ be the Furstenberg measure \eqref{stationary} supported on $\R\P^1$. In \cite[Theorem 3.4]{hochman-sol} it is proved that $\nu$ is exact-dimensional, hence below we will just refer to the dimension of $\nu$ to mean its exact dimension.

In \cite{ledrappier}, Ledrappier computed the dimension of the Furstenberg measure $\nu$ in terms of the Furstenberg entropy of $\mu$. However since the Furstenberg entropy is generally difficult to compute, this result is less practically applicable. Therefore we instead turn to a more recent result of Hochman and Solomyak \cite{hochman-sol} which instead computes the dimension in terms of the random walk entropy. While this result is less general than that in \cite{ledrappier}, since it requires some assumptions on $\mu$, the random walk entropy is easier to compute, for example it coincides with the Shannon entropy of the measure in the case that the support of $\mu$ generates a free semigroup. 

Let $\chi(\mu)$ denote the almost sure value of the limit
$$\chi(\mu)= \lim_{n \to \infty} \frac{1}{n} \log \norm{X_n \cdots X_1},$$
where $(X_n)$ is a sequence of i.i.d random variables with distribution $\mu$. Let $H(\mu)=-\sum_{i=1}^N p_i \log p_i$ denote the Shannon entropy of $\mu$. Let $\mu^{\ast n}$ denote the $n$-th self convolution of $\mu$ in $G$, i.e. the distribution at time $n$ of the random walk on $G$ started at time zero at $1_G$ and driven by $\mu$. The random walk entropy of $\mu$ is then defined as
$$h_{RW}(\mu)=\lim_{n \to \infty} \frac{1}{n} H(\mu^{ \ast n}).$$
The limit is known to exist, see e.g. \cite{kv}, and quantifies the degree of non-freeness of the semigroup $G$ generated by $\mathrm{supp} \mu$: we always have $h_{RW}(\mu) \leq H(\mu)$
with equality if and only if the semigroup generated by $\mathrm{supp} \mu$ is generated freely by it.

\begin{thm}\label{HS}
Suppose that the semigroup $G$ generated by $\mathrm{supp} \mu$ is strongly irreducible, proximal, and Diophantine. Then
$$\dim \nu= \min\left\{1, \frac{h_{RW}(\mu)}{\chi(\mu)}\right\}.$$
In particular if $G$ is a free semigroup we have
\begin{align}
\dim \nu= \min\left\{1, \frac{H(\mu)}{\chi(\mu)}\right\}. \label{lyapdim}
\end{align}
\end{thm}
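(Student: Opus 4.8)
The plan is to reduce the dimension computation to evaluating a single entropy quantity, and then to run an entropy-increase argument to pin that quantity down. The estimate $\dim\nu\le 1$ is free, since $\nu$ is carried by the one-dimensional space $\R\P^1$, so all the content lies in the second term of the minimum. My starting point is Ledrappier's formula \cite{ledrappier}, combined with the exact dimensionality of $\nu$ \cite{hochman-sol}, namely
\[
\dim\nu=\frac{h_F(\mu,\nu)}{2\chi(\mu)},\qquad h_F(\mu,\nu)=-\sum_{i=1}^N p_i\int_{\R\P^1}\log\frac{d\,A_i^{-1}\nu}{d\nu}\,d\nu,
\]
where $h_F\ge 0$ is the Furstenberg entropy and $A_i^{-1}\nu$ is the pushforward of $\nu$ by $A_i^{-1}$. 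The denominator is the Lyapunov gap $\chi_1-\chi_2=2\chi(\mu)$, which is exactly the rate at which the projective maps contract: the derivative of the action of $A\in\SL$ at a line $\R u$ with $\norm{u}=1$, in the angular metric, equals $\norm{Au}^{-2}$, so a typical length-$n$ product $A_\i$ has $\norm{A_\i}\approx e^{n\chi}$ and contracts by $\approx e^{-2n\chi}$. With this in hand the theorem is equivalent to the identity $h_F(\mu,\nu)=\min\{2\chi(\mu),h_{RW}(\mu)\}$.

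The upper bound $h_F\le\min\{2\chi,h_{RW}\}$ is soft and uses neither the Diophantine nor the freeness hypothesis. The inequality $h_F\le 2\chi$ is merely $\dim\nu\le 1$ read through the displayed formula. For $h_F\le h_{RW}$ I would use that $h_{RW}(\mu)$ is the Avez (asymptotic) entropy, which equals the entropy of the Poisson boundary of $(G,\mu)$ \cite{kv}, while $(\R\P^1,\nu)$ is a $\mu$-boundary; the Furstenberg entropy of any $\mu$-boundary cannot exceed that of the maximal boundary. Combining the two inequalities gives $\dim\nu\le\min\{1,h_{RW}/(2\chi)\}$.

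The lower bound $h_F\ge\min\{2\chi,h_{RW}\}$ carries the real weight, and here I would import Hochman's entropy-increase method \cite{hochman1}. First I would pass to a stopping-time coding: at scale $2^{-n}$ select the initial words $\i$ for which $\norm{A_\i}\approx 2^{n/2}$, so that each pushforward $(A_\i)_*\nu$ sits in a ball of radius $\approx 2^{-n}$ about the attracting fixed point of $A_\i$, and write $\nu=\sum_\i p_\i\,(A_\i)_*\nu$. Such words have length $\approx(n\log2)/(2\chi)$, so there are $\approx 2^{n\beta}$ combinatorially distinct branches, where $\beta=h_{RW}/(2\chi)$. Linearising each M\"obius map at its fixed point presents $\nu$, at scale $2^{-n}$, as a self-similar-type superposition of $\approx 2^{n\beta}$ contracted copies of itself, and the Diophantine hypothesis ensures these branches are $2^{-n}$-separated. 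I would then feed this into the discretised inverse theorem for the entropy of such superpositions: either the dyadic-scale entropy $H(\nu,\mathcal D_n)$ is already $\approx n\log2$ (so $\dim\nu=1$), or the superposition forces $H(\nu,\mathcal D_n)\gtrsim n\beta\log2$, whence $\dim\nu\ge\beta$. The sole mechanism that could defeat this is a near-coincidence of distinct branches at some intermediate scale, which is precisely what the Diophantine condition forbids. When $G$ is free one has $h_{RW}=H(\mu)$, giving the stated free-case formula.

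The hard part will be this last step, for two reasons. First, the maps are M\"obius rather than affine and the contraction rate $\norm{A_\i}^{-2}$ is both base-point dependent and random, so the renewal construction that brings every branch to the common scale $2^{-n}$, together with control of the accumulated nonlinear distortion along words of length $\asymp n/\chi$, is genuinely delicate. Second, and more fundamentally, transferring Hochman's inverse theorem---a statement about self-similar measures on $\R$---to the stationary measure $\nu$, and verifying that the Diophantine condition supplies exactly the non-concentration input it requires, is the technical core of the argument; this is the real contribution of \cite{hochman-sol} and cannot be invoked as a black box from the self-similar theory.
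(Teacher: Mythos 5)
Your proposal follows the same route that the paper itself indicates: being a survey, the paper does not prove Theorem \ref{HS} at all, but attributes it to Hochman and Solomyak \cite{hochman-sol} and describes the method in two sentences as Hochman's inverse theorem for entropy growth under convolution \cite{hochman1, hochman2} combined with linearisation techniques. Your outline --- Ledrappier's formula \cite{ledrappier} reducing the theorem to the identity $h_F(\mu,\nu)=\min\{2\chi(\mu),h_{RW}(\mu)\}$, the soft upper bounds ($h_F\le 2\chi$ from $\dim\nu\le 1$, and $h_F\le h_{RW}$ from Poisson-boundary maximality \cite{kv}), and the lower bound via a stopping-time decomposition, linearisation of the M\"obius branches, and Diophantine separation feeding the inverse theorem --- is a faithful reconstruction of that argument, and you are right to flag that adapting the inverse theorem to stationary measures is the actual content of \cite{hochman-sol} rather than something importable from the self-similar theory; the paper's treatment is no more detailed than yours on this point.

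One discrepancy is worth recording, and it favours you. With the paper's definition $\chi(\mu)=\lim_{n\to\infty}\frac{1}{n}\log\norm{X_n\cdots X_1}$, the projective maps contract at the derivative rate $\norm{Au}^{-2}$ (as you note), so the correct formula --- and the one proved in \cite{hochman-sol} --- is $\dim\nu=\min\left\{1,\frac{h_{RW}(\mu)}{2\chi(\mu)}\right\}$, exactly as in your sketch; the denominator $\chi(\mu)$ in the paper's displayed statement drops this factor of $2$. A sanity check is the self-similar representation \eqref{commonrep}: there $\chi(\mu)=\frac{1}{2}\sum_i p_i\log(1/r_i)$, while the dimension of a self-similar measure without overlaps is $H(\mu)\big/\sum_i p_i\log(1/r_i)=H(\mu)/(2\chi(\mu))$, not $H(\mu)/\chi(\mu)$. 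So your conclusion differs from the statement as printed, but it is your normalisation, not the paper's, that is consistent with the paper's own definitions.
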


The right hand side of \eqref{lyapdim} is sometimes referred to as the Lyapunov dimension of $\mu$. This is always an upper bound on the exact dimension of $\mu$ and is conjectured to be equal to the exact dimension whenever $G$ is freely generated by $\mathrm{supp}(\mu)$.

The proof of Theorem \ref{HS} is an outgrowth of methods developed by Hochman \cite{hochman1, hochman2}, where an ``inverse theorem'', which provides conditions guaranteeing  entropy growth under convolution, is proved and applied to compute the Hausdorff dimension of self-similar measures on Euclidean space. In \cite{hochman-sol} these ideas are combined with linearisation techniques in order to prove Theorem \ref{HS}.

\subsubsection{Dimension of self-projective sets}

Let $\A \subset \SL$ be a finite set which generates the semigroup $G$. In this section $G$ will always be proximal, but may not be irreducible. We note that by \cite{FrMaSt} Theorem \ref{ls=attr} still holds when $d=2$ without assuming irreducibility, alternatively see \cite{cj}. We begin by introducing the critical exponent which, after taking the minimum with 1, will play the role of the Lyapunov dimension in the analysis of self-projective sets.

\begin{defn}[Zeta function and critical exponent]
Given a finite subset $\A \subset \SL$, we define the \emph{zeta function} $\zeta_\A: [0,\infty) \to \mathbb{R} \cup \{\infty\}$ as
\begin{equation}
\zeta_\A(s)= \sum_{n \in \N}\sum_{A \in \A^n} \norm{A}^{-2s}
\label{zeta}
\end{equation}
and the \emph{critical exponent} as
$$s_\A= \inf\{ s \geq 0: \zeta_\A(s)< \infty\}.$$
\end{defn}

The critical exponent is analogous to the critical exponent or \emph{Poincare exponent} in the study of Fuchsian groups. Under certain assumptions on $\A$, the critical exponent coincides with the (minimal) root of an appropriate \emph{pressure function}, which in some applications may be easier to use. In particular, in \cite[Theorem 6.3]{cj} it is shown that if $\A$ is irreducible and semidiscrete we have $s_\A=\delta_\A$  where 
$$\delta_\A= \inf\left\{\delta \geq 0: P_\A(\delta):=\lim_{n \to \infty} \left(\sum_{A \in \A^n} \norm{A}^{-2\delta}\right)^{\frac{1}{n}} \leq 1\right\}.$$

Solomyak and Takahashi \cite{sol-tak} proved the following.

\begin{thm}\label{ST}
Suppose $\A$ is uniformly hyperbolic, strongly Diophantine and $\Lambda_\A$ is not a singleton. Then 
$$\hd \Lambda_\A= \min\{1,s_\A\}.$$
\end{thm}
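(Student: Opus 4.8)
The plan is to prove the two inequalities $\hd\Lambda_\A\le\min\{1,s_\A\}$ and $\hd\Lambda_\A\ge\min\{1,s_\A\}$ separately, with the second being the substantial one. For the upper bound, note first that $\hd\Lambda_\A\le 1$ automatically since $\Lambda_\A\subset\R\P^1$. To get $\hd\Lambda_\A\le s_\A$ I would exploit the conformal IFS structure supplied by uniform hyperbolicity. By the Avila--Bochi--Yoccoz multicone theorem there is a finite union of open arcs $M\subset\R\P^1$ with $\overline{A\cdot M}\subset M$ for every $A\in\A$, on which each projective map acts as a conformal contraction. Since projective maps on $\R\P^1$ have automatically controlled distortion away from their poles, and uniform hyperbolicity keeps all compositions uniformly away from their repelling directions, one obtains a bounded distortion estimate showing $\diam(A_\i\cdot M)\asymp\norm{A_\i}^{-2}$ for every $\i\in\I^n$, uniformly in $n$. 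The families $\{A_\i\cdot M:\i\in\I^n\}$ cover $\Lambda_\A$, so for any $s>s_\A$ the finiteness $\zeta_\A(s)<\infty$ forces $\sum_{n\ge N}\sum_{\i\in\I^n}(\diam A_\i\cdot M)^s\to0$ as $N\to\infty$, whence $\mathcal{H}^s(\Lambda_\A)=0$ and $\hd\Lambda_\A\le s$. Letting $s\downarrow s_\A$ gives the upper bound.

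For the lower bound the idea is to produce measures on $\Lambda_\A$ whose dimension approaches $\min\{1,s_\A\}$ and invoke Theorem~\ref{HS}. Since $\A$ is strongly Diophantine it is Diophantine and generates a free semigroup, so $h_{RW}=H$ and Theorem~\ref{HS} applies to any fully supported i.i.d. measure to give $\dim\nu=\min\{1,H(\mu)/\chi(\mu)\}$, provided $G$ is strongly irreducible and proximal. Proximality is immediate; strong irreducibility is the case I would treat as the main one. When $G$ fails to be (strongly) irreducible it preserves a line or finite union of lines, i.e. the system has a common fixed point and $\Lambda_\A$ is (bi-Lipschitz to) a self-similar set as in \eqref{commonrep}; there the conclusion follows from the Euclidean self-similar theory, where the strong Diophantine property supplies exponential separation and hence the dimension equals $\min\{1,s_\A\}$ directly. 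The hypothesis that $\Lambda_\A$ is not a singleton rules out the genuinely degenerate common-attracting-fixed-point case and guarantees the relevant measures are non-atomic.

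To realise $\min\{1,s_\A\}$ as a supremum of the quantities $\min\{1,H(\mu)/\chi(\mu)\}$ I would pass to the finite subsystems $\A^n$. Each $\A^n$ inherits uniform hyperbolicity, proximality, strong irreducibility and the strong Diophantine property (with constant $c^n$), and $\Lambda_{\A^n}\subset\Lambda_\A$. An i.i.d. measure on $\A^n$ corresponds to an $n$-step Bernoulli measure on the coding space, and as $n\to\infty$ these approximate, in both entropy and Lyapunov exponent, the equilibrium state for the pressure $P_\A$ at its critical parameter. The subadditive variational principle, together with the identification $s_\A=\delta_\A$ of the critical exponent with the root of $P_\A$ (valid since the systems under consideration are irreducible and semidiscrete), then yields $\sup_{n,\mu}\min\{1,H(\mu)/\chi(\mu)\}=\min\{1,s_\A\}$. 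For each $\varepsilon>0$ this produces an $n$ and a Furstenberg measure $\nu$ with $\mathrm{supp}\,\nu\subset\Lambda_\A$ and $\dim\nu\ge\min\{1,s_\A\}-\varepsilon$, so $\hd\Lambda_\A\ge\min\{1,s_\A\}-\varepsilon$, and letting $\varepsilon\to0$ completes the lower bound.

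The main obstacle is the lower bound, and within it the deepest input is Theorem~\ref{HS}: the whole point of the strong Diophantine hypothesis is to feed Hochman's inverse-theorem machinery so that the Furstenberg measure attains its random-walk (Lyapunov) dimension despite the unavoidable overlaps of a projective system. Given that theorem as a black box, the remaining work---and the part I expect to require the most care---is the thermodynamic bookkeeping: verifying that the subsystems $\A^n$ satisfy all hypotheses of Theorem~\ref{HS} (in particular controlling strong irreducibility and isolating the self-similar and reducible cases), and proving the variational identity that equates the supremum of the attainable measure dimensions with $\min\{1,s_\A\}$ through $s_\A=\delta_\A$. The upper bound, by contrast, is essentially a clean covering argument once the bounded-distortion estimate $\diam(A_\i\cdot M)\asymp\norm{A_\i}^{-2}$ is established.
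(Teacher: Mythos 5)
Your proposal follows essentially the same route as the paper's: the upper bound is the identical covering argument, using the multicone from uniform hyperbolicity together with the estimate that the projective action of $A$ contracts sets bounded away from the repelling direction by a factor comparable to $\norm{A}^{-2}$, and the lower bound is the same approximation of $\hd \Lambda_\A$ from below by dimensions of Furstenberg measures on the subsystems $\A^n$ via Theorem~\ref{HS}, with the strong Diophantine hypothesis feeding that theorem. The additional bookkeeping you supply (the reducible/self-similar case, inheritance of the hypotheses by $\A^n$, and the variational identity through $s_\A=\delta_\A$) goes beyond the survey's sketch but is consistent with the underlying Solomyak--Takahashi argument.
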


\begin{proof}
We note that the assumption that $\Lambda_\A$ is not a singleton, is to exclude the situation where all matrices in $\A$ have a common attracting fixed point $a$, so that $\Lambda_\A=\{a\}$ has dimension 0, while the critical exponent will be strictly positive. 

We sketch the proof of the upper bound, which is based on constructing a natural cover of $\Lambda_\A$ and provides a heuristic for why the dimension formula holds. Let $A \in \SL$ be hyperbolic. By the singular value decomposition there exist orthonormal bases $\{u_{A,1}, u_{A,2}\} \subset \R^2$ and $\{v_{A,1}, v_{A,2}\}\subset \R^2$ such that  $Au_{A,1}=\norm{A}v_{A,1}$ and $Au_{A,2}=\norm{A}^{-1}v_{A,2}$. In particular $A \cdot B(0,1)$ is an ellipse with semiaxes in the directions of $v_{A,1}$ and $v_{A,2}$ of lengths $\norm{A}$ and $\norm{A}^{-1}$ respectively. Therefore, now turning to the projective action, any subset of $\R\P^1$ which is $\epsilon$-bounded away from the direction of $u_{A,2}$ is contracted by $C_\epsilon\norm{A}^{-2}$ by the projective action of $A$, where $C_\epsilon$ is a constant which depends only on $\epsilon>0$. 

Now, since $\A$ is uniformly hyperbolic there exists a compactly contracting multicone $M$ and moreoever one can show that for all but finitely many $A \in G$ the multicone does not intersect the directions $u_{A,2}$. Hence we can find $\epsilon>0$ such that all but finitely many directions $u_{A,2}$ are $\epsilon$-bounded away from $M$.

Now, fix $\delta>0$. We can find a finite subset $H \subset G$ such that: (a) for all $A \in H$, $A \cdot M$ has diameter at most $\delta$ and (b) $\Lambda_\A$ is covered by $\bigcup_{A \in H} A \cdot M$. In particular the $s$-dimensional $\delta$-approximate Hausdorff measure $\mathcal{H}_\delta^s(\Lambda_\A)$ satisfies
$$\mathcal{H}_\delta^s(\Lambda_\A) \leq \sum_{A \in H} |A \cdot M|^s \leq  C_\epsilon\sum_n \sum_{A \in \A^n} \norm{A}^{-2s}<\infty$$
provided that $s>s_\A$. This proves that $\hd \Lambda_\A \leq s_\A$.

Note that this proof has only used the uniform hyperbolicity of $\A$, i.e. $\min\{1,s_\A\}$ is an upper bound on the Hausdorff dimension of $\Lambda_\A$ for any uniformly hyperbolic $\A$. 

On the other hand, the lower bound requires the strong Diophantine property. The lower bound follows by approximating the dimension of the self-projective set from below by the dimension of Furstenberg measures associated to subsystems of the form $\A^n$ and applying Theorem \ref{HS}.
\end{proof}

In \cite{cj} the authors proved the following generalisation of Theorem \ref{ST} to semidiscrete sets.

\begin{thm} \label{CJ}
Suppose $\A$ is semidiscrete, strongly Diophantine and $\Lambda_\A$ is not a singleton. Then 
$$\hd \Lambda_\A= \min\{1,s_\A\}.$$
\end{thm}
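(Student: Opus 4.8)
The plan is to prove the two inequalities $\hd\Lambda_\A\leq\min\{1,s_\A\}$ and $\hd\Lambda_\A\geq\min\{1,s_\A\}$ separately, in each case reducing to the uniformly hyperbolic Theorem~\ref{ST}. For the lower bound I would exhaust $\Lambda_\A$ by limit sets of uniformly hyperbolic subsystems whose critical exponents approach $s_\A$; for the upper bound I would adapt the covering argument in the proof of Theorem~\ref{ST}, which already shows that the cover is efficient whenever one controls the contracting singular directions, the only genuinely new difficulty being that for a merely semidiscrete system these directions need no longer stay uniformly away from the invariant domain.

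For the lower bound, fix $t<\min\{1,s_\A\}$. The aim is to produce a finite $\mathcal{B}\subset\A^m$, for some $m\in\N$, which is uniformly hyperbolic, strongly Diophantine, and has $s_\mathcal{B}>t$. Since $\Lambda_\mathcal{B}\subseteq\Lambda_\A$, Theorem~\ref{ST} then gives $\hd\Lambda_\A\geq\hd\Lambda_\mathcal{B}=\min\{1,s_\mathcal{B}\}>t$, and letting $t\uparrow\min\{1,s_\A\}$ closes this direction. To build $\mathcal{B}$ I would use the identity $s_\A=\delta_\A$ for irreducible semidiscrete $\A$: since $P_\A(t)>1$ for $t<\delta_\A$, the sums $\sum_{A\in\A^m}\norm{A}^{-2t}$ grow exponentially in $m$, so for large $m$ a suitable finite collection of length-$m$ products already has pressure root exceeding $t$. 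One then passes to a uniformly hyperbolic subcollection by discarding the finitely many words realising near-parabolic behaviour or coincidences of singular directions; the hypothesis that $\Lambda_\A$ is not a singleton is what guarantees that a nondegenerate hyperbolic subsystem with a genuine multicone exists. Finally the strong Diophantine property is inherited, since length-$m$ words in a free Diophantine system remain free and the separation constant merely rescales from $c$ to $c^m$.

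For the upper bound I would reprise the cover $\{A\cdot\overline{D}\}$ from the proof of Theorem~\ref{ST}, where $D\subset\R\P^1$ is an open set with $\bigcup_{A\in\A}A\cdot D\subseteq D$, furnished by semidiscreteness. Using the singular value decomposition exactly as there, for each $A\in G$ whose contracting direction $u_{A,2}$ is $\epsilon$-bounded away from $D$ one has $|A\cdot\overline{D}|\leq C_\epsilon\norm{A}^{-2}$. Taking, for small $\delta>0$, the finite antichain $H_\delta\subset\A^*$ of minimal words with $|A\cdot\overline{D}|\leq\delta$, one obtains
$$\mathcal{H}_\delta^s(\Lambda_\A)\leq\sum_{A\in H_\delta}|A\cdot\overline{D}|^s\leq C\sum_{n}\sum_{A\in\A^n}\norm{A}^{-2s}=C\,\zeta_\A(s)<\infty$$
for $s>s_\A$, whence $\hd\Lambda_\A\leq s_\A$ on letting $\delta\to0$.

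The main obstacle is making this last step legitimate, and it is precisely the feature separating the semidiscrete from the uniformly hyperbolic case. In the uniformly hyperbolic case the compactly contracting multicone keeps all but finitely many directions $u_{A,2}$ a fixed distance from $D$, so the estimate $|A\cdot\overline{D}|\leq C\norm{A}^{-2}$ is uniform and the antichain $H_\delta$ genuinely exists because diameters shrink along every branch. For a merely semidiscrete system the contracting directions may accumulate on $\partial D$, for instance at a parabolic fixed point, and one must also rule out the degenerate semidiscrete behaviour in which no open set is contracted strictly. The heart of the proof is therefore a uniform bound $|A\cdot\overline{D}|\leq C\norm{A}^{-2}$ valid along the whole cover, established by analysing the geometry near the slowly contracting points of $\partial D$. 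A direct computation at a parabolic point — conjugating to $z\mapsto z+t$ and passing to the coordinate $1/z$ — reassuringly gives $|A^n\cdot\overline{D}|\asymp\norm{A^n}^{-2}\asymp n^{-2}$ when $D$ avoids the fixed point, so the estimate does survive a single parabolic orbit; the work, which I expect to occupy most of the argument in \cite{cj}, lies in proving it uniformly over all of $G$ and in verifying that $D$ can be chosen to remain boundedly separated from the relevant contracting directions. This is exactly where the finer structure theory of semidiscrete sets must be invoked.
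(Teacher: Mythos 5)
Your upper bound is built on a claim that is false, and it fails precisely in the situations that separate Theorem~\ref{CJ} from Theorem~\ref{ST}. Suppose $G$ contains a parabolic element $P$ with fixed point $p$ (the prototypical semidiscrete, non-uniformly-hyperbolic situation). Since $P^k y\to p$ for \emph{every} $y\in\R\P^1$, any non-empty forward-invariant open set $D$ satisfies $p\in\overline{D}$; so $D$ cannot avoid $p$, and your ``reassuring'' computation --- which assumes exactly that $D$ avoids the parabolic fixed point --- treats a case that cannot occur. When $p\in\overline{D}$ the estimate breaks: conjugate so that $P$ acts as $z\mapsto z+1$, $p=\infty$ and $\overline{D}\supset[c,\infty]$; then $P^n\cdot\overline{D}\supset[c+n,\infty]$ has projective diameter comparable to $1/n\asymp\norm{P^n}^{-1}$, not $\norm{P^n}^{-2}\asymp n^{-2}$ (equivalently, the singular directions $u_{P^n,2}$ converge to $p\in\overline{D}$, so no $\epsilon$-separation is available; if $p$ lies in the interior of $D$ the situation is even worse, with $|P^n\cdot D|$ bounded below). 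Words following the branch of powers of $P$ do appear among your minimal words $H_\delta$, so the term-by-term comparison $|A\cdot\overline{D}|^s\leq C\norm{A}^{-2s}$ underlying your displayed inequality is wrong, and no analysis ``near the slowly contracting points of $\partial D$'' can establish a false bound. This is exactly why the paper's proof never covers $\Lambda_\A$ with the semidiscrete system: in the irreducible case, \cite{cj} produces a hyperbolic $h\in G$ whose attracting fixed point lies in an interval disjoint from $\Lambda_{\A^{-1}}$ and whose repelling fixed point lies in an interval disjoint from $\Lambda_\A$, uses $h$ to build an \emph{infinite} uniformly hyperbolic subset $\A'\subset G$ with $\hd\Lambda_{\A'}=\hd\Lambda_\A$, and then deduces \emph{both} bounds by applying Theorem~\ref{ST} to finite subsets of $\A'$; the dimension loss near parabolic-type points is absorbed by this inducing construction, not by a diameter estimate for the original system.

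There are two further gaps. First, Theorem~\ref{CJ} carries no irreducibility hypothesis (the paper is explicit that here $G$ ``may not be irreducible''), yet your lower bound is launched from the identity $s_\A=\delta_\A$, which the paper states only for irreducible semidiscrete systems. One always has $\delta_\A\leq s_\A$, but the implication you need ($t<s_\A\Rightarrow P_\A(t)>1$) is precisely the nontrivial direction $s_\A\leq\delta_\A$. The reducible case is not vacuous (e.g.\ a free, Diophantine, upper-triangular system containing a parabolic is semidiscrete but not uniformly hyperbolic), and the paper handles it by a separate case analysis of the associated M\"obius semigroup. Second, while your lower-bound skeleton --- exhaust by uniformly hyperbolic, strongly Diophantine finite subsystems and apply Theorem~\ref{ST} --- agrees in spirit with the paper's, your construction of such subsystems inside $\A^m$ is asserted rather than proved, and it is not a matter of ``discarding finitely many words'': the number of length-$m$ words containing long near-parabolic blocks grows with $m$, uniform hyperbolicity is a property of \emph{all products} from the subcollection rather than of its individual elements, and one must show that the discarded words carry an asymptotically negligible share of the pressure. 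That statement is essentially equivalent to identifying $s_\A$ with the critical exponent of an induced hyperbolic system, i.e.\ to the main technical content of \cite{cj}; as written, your argument presupposes the theorem's substance at both ends.
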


The proof of Theorem \ref{CJ} splits into two parts: one which handles the reducible case and the other which tackles the irreducible case. Up to conjugation, the reducible setting boils down to considering a few cases which are tackled by studying the corresponding Mobius semigroup. In the irreducible setting, one can show that the semigroup $G$ contains some hyperbolic element whose attracting fixed point is contained in an open interval which does not intersect $\Lambda_{\A^{-1}}$, where $\A^{-1}$ denotes the set of inverses of $\A$, and whose repelling fixed point is contained in an open interval which does not intersect $\Lambda_\A$. One can use this hyperbolic element to define an infinite uniformly hyperbolic subset $\A' \subset G$ with $\hd \Lambda_{\A'}=\hd \Lambda_{\A}$. In particular, the result follows by applying Theorem \ref{ST} to finite subsets of  $\A'$.

\subsubsection{Examples}

Finally we demonstrate how the dimension formula can be used to explicitly produce bounds or even exact values for the dimension of certain self-projective sets.

Our first example demonstrates that the slow contraction by parabolic elements immediately implies a lower bound on the dimension of self-projective sets generated by a set of matrices whose semigroup contains a parabolic element.

\begin{eg}
Suppose $\A$ is semidiscrete and $G_\A$ contains a parabolic element $P$. It follows that $\norm{P^n}=\mathrm{O}(n)$. Thus
\[
\zeta_\A(s)\geq \sum_{n=1}^\infty \norm{P^n}^{-2s}= C^{-2s}\sum_{n=1}^\infty n^{-2s},
\]
for some constant $C>0$, and the sum on the right hand side diverges for all $s\leq\tfrac{1}{2}$. So $s_\A\geq \tfrac{1}{2}$, which in turn implies that if $\A$ is strongly Diophantine and $\Lambda_\A$ is not a singleton $\hd \Lambda_\A\geq \tfrac{1}{2}$.
\end{eg}

Our next example shows that if a set $\A$ is semidiscrete but not discrete, then the dimension of the limit set $\Lambda_\A$ must be full.

\begin{eg}\label{eg-nondiscr}
Suppose $\A$ generates a semigroup which is semidiscrete but not discrete. There exists a sequence $(A_n)\subseteq G_\A$ that converges to some $A\in \SL$. So, there exists some $r>0$ such that $\norm{A_n}<r$, for all $n$. Hence, for all $s>0$
\[
\zeta_\A(s)\geq\sum_{n=1}^\infty \norm{A_n}^{-2s}\geq \sum_{n=1}^\infty\frac{1}{r^{2s}}.
\]
Thus $\zeta_\A(s)$ diverges for all $s>0$, implying that $s_\A=\infty$. In particular if $\A$ is strongly Diophantine and $\Lambda_\A$ is not a singleton, it follows that $\hd \Lambda_\A=1$.
\end{eg}

\subsubsection{Problems}

It is natural to ask to what extent is $\min\{1,s_\A\}$ equal to the Hausdorff dimension of $\Lambda_\A$ for $\A \subset \SL$. The ``exact overlaps conjecture'' \cite{simon} for self-similar sets can naturally be generalised to all self-projective sets. Since the notion of overlaps do not make sense in our setting (since our maps are homeomorphisms of $\R\P^1$), it makes more sense to refer to this more general version of the exact overlaps conjecture as the ``freeness conjecture'':

\begin{conj}[Freeness conjecture for self-projective sets] \label{conj}
Suppose $\A \subset \SL$ generates a free semigroup and $\Lambda_\A$ is non-empty and is not a singleton. Then
$$\hd \Lambda_\A=\min\{1,s_\A\}.$$
\end{conj}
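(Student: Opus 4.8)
The plan is to establish the two inequalities $\hd \Lambda_\A \le \min\{1, s_\A\}$ and $\hd \Lambda_\A \ge \min\{1, s_\A\}$ separately, and to read the conjecture as the statement obtained by weakening the \emph{strongly Diophantine} hypothesis of Theorems \ref{ST} and \ref{CJ} to mere freeness. Since freeness is exactly the self-projective analogue of the absence of exact overlaps, one should expect the lower bound to carry essentially all of the difficulty, and to be as hard as the exact overlaps conjecture for self-similar sets; the upper bound, by contrast, should follow from covering arguments already in hand.

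\emph{Upper bound.} When $s_\A \ge 1$ there is nothing to prove, since $\hd \Lambda_\A \le \dim \R\P^1 = 1$, and when $s_\A = \infty$ the bound is again trivial, so the only nontrivial regime is $s_\A < 1$. First I would argue that $s_\A < \infty$ forces $\A$ to be discrete and semidiscrete: freeness makes all products $A \in \A^n$ distinct matrices, so if the closure of $G$ contained the identity, or more generally if infinitely many distinct products had bounded norm, then $\zeta_\A(s)$ would diverge for every $s>0$, exactly as in Example \ref{eg-nondiscr}, giving $s_\A = \infty$. Having reduced to the semidiscrete case, I would run the covering argument from the proof of Theorem \ref{ST}, in the form extended to semidiscrete systems in Theorem \ref{CJ} via passage to an infinite uniformly hyperbolic subsystem $\A' \subset G$. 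Crucially, this argument uses only the geometry of the singular value decomposition and the existence of a forward-invariant region for $G$, and not the Diophantine property, which enters only in the lower bound; this yields $\hd \Lambda_\A \le s_\A$ for every free $\A$.

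\emph{Lower bound.} Here I would follow the scheme of Theorems \ref{ST} and \ref{CJ}: approximate $\hd \Lambda_\A$ from below by the dimensions of Furstenberg measures carried by subsystems of the form $\A^n$, so that the set-level statement reduces to a measure-level one. Concretely, for a measure $\mu$ fully supported on a suitable subsystem one wants $\dim \nu = \min\{1, h_{RW}(\mu)/\chi(\mu)\}$, and since $\A$ generates a free semigroup one has $h_{RW}(\mu) = H(\mu)$, so the target is precisely the dimension formula \eqref{lyapdim}. Optimising over $\mu$ (and over subsystems) then recovers $\min\{1, s_\A\}$ through the relation between $s_\A$ and the pressure $P_\A$. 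Thus the conjecture is reduced to proving Theorem \ref{HS} \emph{without} the Diophantine hypothesis.

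\emph{The main obstacle.} The Diophantine hypothesis is used in the proof of Theorem \ref{HS} precisely to feed Hochman's entropy-growth inverse theorem: it guarantees that distinct length-$n$ products remain $c^n$-separated, which prevents the entropy of $\mu^{\ast n}$ from concentrating super-exponentially at small scales and so forces the dimension up to $\min\{1, h_{RW}(\mu)/\chi(\mu)\}$. Freeness only guarantees that distinct products are distinct, not quantitatively separated, and the cited examples exhibiting exponential condensation show that freeness does not imply the Diophantine property. Bridging this gap, that is, ruling out that a sub-exponential but nonzero approach to coincidence causes a dimension drop, is the genuinely hard step, equivalent in spirit to the exact overlaps conjecture itself, and I do not expect it to be resolved by the present technology in full generality. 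What one can prove unconditionally are the two regimes where the gap closes automatically: when the entries of $\A$ are algebraic, freeness already implies the strong Diophantine property, so the conjecture follows from Theorem \ref{CJ}; and for Lebesgue-almost every $\A \subset \SL$ the Diophantine property holds, again reducing to Theorem \ref{CJ}. A more speculative route for the remaining cases, avoiding a single stationary measure, would be to construct a Patterson--Sullivan type conformal measure at the critical exponent $s_\A$ directly on $\Lambda_\A$ and show it has local dimension $s_\A$; but the same overlap obstruction resurfaces when one tries to bound its local dimensions from below.
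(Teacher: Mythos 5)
The statement you were given is a conjecture: the paper does not prove it either, and your proposal is right to treat it as open. Your identification of the irreducible difficulty -- proving Theorem \ref{HS} without the Diophantine hypothesis, which freeness alone does not supply because of exponential condensation examples -- matches the paper's framing exactly, as does your observation that the algebraic-entries case and the Lebesgue-a.e. case (note: the cited Solomyak--Takahashi result is for sets of \emph{positive} matrices, a slight overstatement in your write-up) follow from Theorem \ref{CJ}. What the paper actually proves alongside the conjecture is a reduction: it suffices to establish Conjecture \ref{conj} under the extra assumption that $\A$ is semidiscrete. Your proposal contains its own version of this reduction, and this is where the genuine gap lies.

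Your reduction (non-semidiscrete $\Rightarrow$ infinitely many distinct bounded-norm elements of $G$ $\Rightarrow$ $\zeta_\A$ diverges for all $s>0$ $\Rightarrow$ $s_\A=\infty$, as in Example \ref{eg-nondiscr}) disposes only of the \emph{upper} bound. When $s_\A=\infty$ the conjecture asserts $\hd\Lambda_\A=1$, which is a lower bound, and your lower-bound machinery cannot reach it: free but non-semidiscrete systems exist (for instance a semigroup containing an elliptic element of infinite order, or a reducible system with hyperbolic elements whose fixed points are crossed), and these are precisely the systems failing proximality and/or strong irreducibility, so Furstenberg measures and any version of Theorem \ref{HS}, Diophantine or not, are unavailable. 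The paper closes these cases by soft topological arguments rather than measure theory: a case analysis using \cite[Theorems 7.1, 9.3 and 14.1]{JS} and \cite[Lemma 3.11]{c} shows that if $\A$ is not semidiscrete, is free, and $\Lambda_\A$ is non-empty and not a singleton, then either $\Lambda_\A=\R\P^1$ (elliptic element of infinite order), or $G$ contains hyperbolic $A_1,A_2$ with $\alpha_{A_1}=\beta_{A_2}$ and $\alpha_{A_2}\neq\beta_{A_1}$, in which case $\Lambda_{\{A_1,A_2\}}$ is closed, connected and not a singleton; in both situations $\hd\Lambda_\A=1=\min\{1,s_\A\}$, and in every other configuration one finds commuting elements (so $G$ is not free), a singleton limit set, or semidiscreteness. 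Without this step your ``reduction to the semidiscrete case'' leaves unproven exactly the degenerate cases of the conjecture -- which, ironically, are the ones that can be settled unconditionally with current tools.
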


Notice that Conjecture~\ref{conj} does not involve any assumptions on the contracting properties of $\A$. With some work it can be shown that it suffices to prove the conjecture under the extra assumption that $\A$ is semidiscrete. Let us sketch this reduction.

Assume that $\A$ is not semidiscrete, $\Lambda_\A\neq\emptyset$ and let $G$ be the semigroup $\A$ generates. We are going to prove that one of the following happens: either Conjecture~\ref{conj} is true; $G$ is not free; or $\Lambda_\A$ is a singleton.\\
If $\A$ is elliptic, then $G$ contains one of the following: an elliptic matrix of finite order; the identity; or an elliptic matrix of infinite order. In the first two cases $G$ is not free; and in the latter $\Lambda_\A=\R\P^1$ and $s_\A=+\infty$, so the conjecture holds trivially.\\
Suppose now that $\A$ is not elliptic. Since it is not semidiscrete either, by \cite[Theorem 14.1]{JS}, either $\A$ is reducible, or it maps a closed and connected, proper subset of $\R\P^1$ (that is not a singleton) inside itself. Let us first deal with the case where $\A$ is reducible.\\
Since $\A$ is not elliptic, all matrices in $\A$ are either hyperbolic or parabolic. Note that if $G$ contains two parabolic matrices $P_1,P_2$, with $P_1\notin\{{P_2}^n\colon n\in\N\}$, then $P_1$ and $P_2$ have a common fixed point in $\R\P^1$, which means that they commute and so $G$ is not free. Suppose that all parabolic matrices of $G$ (if they exist) are of the form $P^n$, for some parabolic $P\in\SL$ and some $n\in\N$. If all attracting fixed points of hyperbolic elements and the unique fixed point of parabolic elements (if they exist) coincide, then $\Lambda_\A$ is a singleton. Similarly, if all \emph{repelling} fixed points of hyperbolic elements and the unique fixed point of the parabolic element (if it exists) coincide, then $\A$ is semidiscrete, contrary to our assumption. The only case left to consider is when there exist hyperbolic matrices $A_1,A_2\in G$, with $\alpha_{A_1}=\beta_{A_2}$. If we also have that $\alpha_{A_2}=\beta_{A_1}$, then $A_1$ and $A_2$ commute and $G$ is not free. Otherwise, \cite[Lemma 3.11]{c} is applicable and yields that $\Lambda_{\{A_1,A_2\}}$ is closed and connected (and not a singleton), and so has Hausdorff dimension 1. Also, the set $\{A_1,A_2\}$ does not generate a discrete semigroup, meaning that $s_\A=\infty$, by the work carried out in Example~\ref{eg-nondiscr}. So, Conjecture~\ref{conj} holds trivially, concluding the case where $\A$ is reducible.\\
We now assume that the matrices in $\A$ map a closed and connected set $I\subsetneq \R\P^1$, that is not a singleton, inside itself. If all matrices in $\A$ map $I$ strictly inside itself, then $\A$ is semidiscrete by \cite[Theorem 7.1]{JS}, contrary to our assumption. So, there exists a matrix $A\in\A$ that fixes $I$ set-wise. Let $S$ be the subsemigroup of $G$ containing all the elements of $G$ that fix $I$. Since $\A$ is not elliptic, $\mathrm{Id}\notin S$. If $S=\{A^n\colon n\in\N\}$, then it is a semidiscrete semigroup, and \cite[Theorem 9.3]{JS} implies that $G\, \left(=G\setminus\{\mathrm{Id}\}\right)$ is also semidiscrete, which yields a contradiction. So, there exists (a necessarily hyperbolic) $B\in S$, with $B\notin\{A^n\colon n\in\N\}$, meaning that $A$ and $B$ commute and $G$ is not free.

Conjecture \ref{conj} tells us what Hausdorff dimension we should expect in the case that $\A$ freely generates a semigroup $G$, but the non-free case is also interesting.

\begin{ques}
Can one characterise the dimension of a self-projective set in the non-free case? For example, under the (generalised) weak separation or finite type conditions mentioned in Question \ref{wsc}? Can the dimension in the non-free case be characterised in terms of a zeta function which sums over the semigroup $G$ rather than \eqref{zeta} (which counts with multiplicity)? 
\end{ques}

\begin{ques}
What about other dimensions? e.g. the box dimension, Assouad dimension, Fourier dimension etc.
\end{ques}

\section{The higher dimensional setting}\label{higher}

Suppose that $\A\subset \mathrm{SL}(d,\R)$ and let $G$ be the semigroup generated by $\A$. We are going to prove Theorem~\ref{ls=attr} from the introduction. The techniques for our proof are inspired by \cite[Section 4.1]{BQ}.

For simplicity write $\Pi_\A=\{\pi\in\overline{\R G}\colon \mathrm{rank}(\pi)=1\}$. Recall that due to our standing assumptions $\Pi_\A$ is non-empty.

As a final piece of notation, for a set $D\subset\R^d$ we write $\P(D)$ for its projection into $\R\P^{d-1}$.

\begin{lma}\label{ls-compact}
If $\A$ is such that $\Pi_\A\neq\emptyset$, then $\Lambda_\A$ is a compact subset of $\R\P^{d-1}$.
\end{lma}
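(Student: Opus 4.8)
The plan is to exploit the compactness of $\R\P^{d-1}$ itself, so that it suffices to prove $\Lambda_\A$ is \emph{closed}. I would take an arbitrary sequence $(x_n)$ in $\Lambda_\A$ with $x_n \to x$ for some $x \in \R\P^{d-1}$, and show $x \in \Lambda_\A$. By definition each $x_n = \P(\mathrm{Im}\,\pi_n)$ for some rank-one $\pi_n \in \overline{\R G}$, i.e. $\pi_n \in \Pi_\A$.

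First I would record that $\overline{\R G}$ is a cone: since $\R G$ is closed under multiplication by real scalars, so is its closure. This lets me rescale each $\pi_n$ to a representative with $\norm{\pi_n}=1$ without leaving $\overline{\R G}$ and without changing $\mathrm{Im}\,\pi_n$ (rescaling by a nonzero scalar preserves the column space). The normalized matrices then lie in the unit sphere of the finite-dimensional space of $d\times d$ matrices, which is compact, so after passing to a subsequence I may assume $\pi_n \to \pi$ with $\norm{\pi}=1$; and $\pi \in \overline{\R G}$ because $\overline{\R G}$ is closed.

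Next I would pin down the rank of $\pi$. Rank is lower semicontinuous, so $\mathrm{rank}(\pi) \leq \liminf_n \mathrm{rank}(\pi_n) = 1$, while $\pi \neq 0$ (as $\norm{\pi}=1$) forces $\mathrm{rank}(\pi) \geq 1$; hence $\pi \in \Pi_\A$. Finally I would check that $\pi \mapsto \P(\mathrm{Im}\,\pi)$ is continuous on rank-one matrices: the image is spanned by any nonzero column, and since $\pi \neq 0$ some column $j_0$ of $\pi$ is nonzero, so for large $n$ the $j_0$-th columns of $\pi_n$ converge to it and span $\mathrm{Im}\,\pi_n$, giving $\P(\mathrm{Im}\,\pi_n) \to \P(\mathrm{Im}\,\pi)$. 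Thus along this subsequence $x_n = \P(\mathrm{Im}\,\pi_n) \to \P(\mathrm{Im}\,\pi)$; since the original sequence already converges to $x$, this forces $x = \P(\mathrm{Im}\,\pi) \in \Lambda_\A$, so $\Lambda_\A$ is closed and therefore compact.

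The main obstacle I anticipate is controlling the rank and the image simultaneously in the limit: a limit of rank-one matrices can a priori collapse to rank zero, which is precisely why the normalization step is indispensable — it guarantees the limiting matrix is nonzero, thereby pinning its rank at one, and it is also exactly the condition that makes the image map continuous at $\pi$ (continuity genuinely fails at the zero matrix, where every column degenerates). The only other point needing care is the cone property of $\overline{\R G}$, which is what legitimizes the normalization in the first place; note that no irreducibility or proximality hypothesis is required here beyond $\Pi_\A \neq \emptyset$.
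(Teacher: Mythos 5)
Your proof is correct and uses essentially the same ingredients as the paper's: normalization to the unit sphere (justified by the cone structure of $\overline{\R G}$ and harmless since rescaling preserves the image), lower semicontinuity of rank to pin the limiting matrix at rank one, and continuity of $\pi\mapsto\P(\mathrm{Im}\,\pi)$ on nonzero rank-one matrices. The paper merely packages these facts as ``$\Lambda_\A$ is the continuous image of the compact set $B=\{\pi\in\overline{\R G}\colon \mathrm{rank}(\pi)\leq 1,\ \norm{\pi}=1\}$'' rather than running your sequential closedness argument, so the two proofs coincide in substance.
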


\begin{proof}
Observe that if $\pi\in\Pi_\A$, then $\norm{\pi}\neq0$ and $\pi'=\frac{1}{\norm{\pi}}\pi\in\Pi_\A$. Moreover, $\mathrm{Im}\ \pi' = \mathrm{Im}\ \pi$ and so $\Lambda_\A = \{\mathrm{Im} \ \pi \colon \pi\in\Pi_\A \ \text{with}\ \norm{\pi}=1\}$.\\
Define $B=\{\pi\in \overline{\R G} \colon \mathrm{rank}(\pi)\leq 1\}\cap \{\pi\in\overline{\R G}\colon \norm{\pi}=1\}$, which is a subset of $\overline{\R G}$. The set $\{\pi\in \overline{\R G} \colon \mathrm{rank}(\pi)\leq 1\}$ is closed because $\mathrm{rank}(\cdot)$ is a lower semi-continuous function on the set of $d\times d$ matrices. Also, $\{\pi\in\overline{\R G}\colon \norm{\pi}=1\}$ is compact, which implies that $B$ is compact. If we now define the map $F\colon  \overline{\R G}\to \R\P^{d-1}$ with $F(\pi)=\P(\mathrm{Im}\ \pi)$, the result follows from the fact that $F$ is continuous and $F(B)=\Lambda_\A$.
\end{proof}

\begin{proof}[Proof of Theorem~\ref{ls=attr}]
We first show that $\Lambda_\A$ contains the closure of all attracting fixed points of proximal elements of $G$. Note that since $\Lambda_\A$ is compact, by Lemma~\ref{ls-compact}, it suffices to prove this inclusion without the closure. Let $A$ be a proximal element of $G$ and let $\lambda$ be its leading eigenvalue. We can conjugate $\A$ by a matrix in $\mathrm{SL}(d,\R)$ so that $A$ is $\begin{pmatrix}
\lambda & \mathbf{0}\\
\mathbf{0} & B,
\end{pmatrix}$
where $B$ is in $\mathrm{GL}(d-1,\R)$ with eigenvalues of modulus strictly less than $\lvert \lambda \rvert$. Hence, $\pi=\lim\lambda^{-n}A^n$ is a rank one matrix and $\P(\mathrm{Im}\ \pi)$ is the attracting fixed point of $A$. \\
For the other inclusion, suppose $x\in\Lambda_\A$ and $x=\P(\mathrm{Im}\ \pi)$, for some $\pi\in\Pi_\A$. Since $\A$ is irreducible, we can find $A\in G$ so that $\pi A \pi\not\equiv0$. So, $A(\mathrm{Im}\ \pi)\not\subset \mathrm{Ker}\ \pi$. Write $\pi = \lim r_n A_n$ for $r_n\in\R$ and $A_n\in\A^*$, and note that $\pi A \pi=\lim r_n^2 A_nAA_n$. Let $U$ be a compact neighbourhood of $\P(\mathrm{Im}\ \pi)$ in $\R\P^{d-1}$. We can choose $U$ small enough so that $U\cap \P(\mathrm{Ker}\ \pi)=A(U)\cap \P(\mathrm{Ker}\ \pi) = \emptyset$. Then $\pi A\pi (U) = \P(\mathrm{Im}\ \pi)$, implying that $A_nAA_n$ maps $U$ into its interior for all $n$ large enough. Note that $U$ can be chosen so that there exists a closed convex cone $C\subset \R^d$, with $\P(C)=U$. So, $A_nAA_n$ maps $C$ into its interior, for all large $n$, and then the Krein-Rutman Theorem \cite{KrRu1950} implies that $A_nAA_n$ has a simple leading eigenvalue, with eigenvector in $C$. This means that $A_nAA_n$ is proximal with attracting fixed point in $U$. Since $U$ can be chosen arbitrarily small, we conclude that the attracting points of proximal matrices of $G$ are dense in $\Lambda_\A$.
\end{proof}

\begin{ques}\label{irred-q}
Does Theorem \ref{ls=attr} remain true if the assumption of irreducibility is dropped?
\end{ques}

\subsection{Contraction properties}

Let $\A \subset  \mathrm{SL}(d,\R)$ be finite. For $d \geq 3$, uniform hyperbolicity is replaced by the notion of 1-domination of $\A$. For a matrix $A \in \mathrm{SL}(d,\R)$ we let $\norm{A}=\alpha_1(A) \geq \alpha_2(A) \geq \cdots \geq \alpha_d(A)>0$ denote the singular values of $A$, that is, $\alpha_i(A)=\sqrt{\lambda_i(A^*A)}$ where $\lambda_1(A^*A) \geq \cdots \geq \lambda_d(A^*A)>0$ are the sequence of eigenvalues of $A^*A$, counted with multiplicity.

\begin{defn}[1-domination]
We say that $\A\subset \mathrm{SL}(d,\R)$ is 1-dominated if there exists $c>0$ and $\lambda>1$ such that for all $A \in \A^n$,
$$\frac{\alpha_1(A)}{\alpha_2(A)} \geq c\lambda^n.$$
\end{defn}

Note that in the case where $d=2$ we have $\alpha_2(A)=\norm{A}^{-1}$, hence this notion is equivalent to uniform hyperbolicity. There are analogous notions of $k$-domination, for $2 \leq k \leq d-1$. 

Bochi and Gourmelon obtained the following equivalent geometric criteria for 1-domination (in fact, for $k$-domination, although we only state the 1-dominated version below):

\begin{thm}\label{bogo}
$\A$ is 1-dominated if and only if there exists a non-empty subset $C \subset \R\P^{d-1}$ such that the closure of $C$ is mapped inside the interior of $C$ by $\A$ and $C$ does not contain the projection of a hyperplane.

Moreover, $C$ can be chosen to have a finite number of connected components with pairwise disjoint closures.
\end{thm}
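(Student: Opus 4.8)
The plan is to prove Theorem~\ref{bogo} by establishing both directions of the equivalence, treating the construction of the invariant cone (the ``only if'' direction) as the substantive part, since the reverse direction follows from a standard argument showing that a compactly contracting cone forces exponential singular value gaps.

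Let me think about what this theorem is really saying. We have $\A \subset \mathrm{SL}(d,\R)$, and 1-domination means the ratio $\alpha_1(A)/\alpha_2(A)$ grows exponentially over products. The geometric criterion says this is equivalent to having a multicone $C \subset \R\P^{d-1}$ (a set avoiding any projective hyperplane) whose closure maps into its interior under every generator.

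For the **reverse direction** (cone implies 1-domination): Suppose such a $C$ exists. The key geometric fact is that if a convex-ish region in $\R\P^{d-1}$ that avoids hyperplanes is mapped compactly inside itself, then each map $A$ must contract transverse directions relative to the top direction. The standard approach: lift $C$ to a cone in $\R^d$, and the condition that $A$ maps $\overline{C}$ into $\mathrm{int}(C)$ while $C$ avoids a hyperplane means the image ellipsoid $A \cdot B(0,1)$ must be very eccentric — the top singular direction $v_{A,1}$ aligns (roughly) with $C$, and the contraction ratio of the second singular value forces $\alpha_1/\alpha_2$ to be bounded below. Iterating over products $A \in \A^n$ and using compactness of $\overline{C}$ to get a uniform contraction factor $\lambda > 1$ at each step yields the exponential gap $\alpha_1(A)/\alpha_2(A) \geq c\lambda^n$.

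For the **forward direction** (1-domination implies cone), which I expect to be the main obstacle: the natural candidate for $C$ is built from the top singular directions. Given 1-domination, the ratio $\alpha_1/\alpha_2 \to \infty$ exponentially, so by singular value decomposition the image $A \cdot B(0,1)$ is an increasingly eccentric ellipsoid whose major axis $v_{A,1}(A)$ defines a well-defined ``dominated splitting'' into a line bundle $E^1$ and complementary bundle $E^2$. The standard construction (due to Bochi--Gourmelon, via the theory of dominated splittings for linear cocycles) takes $C$ to be a neighborhood of the set of attained top directions $\{\P(\mathrm{Im}(\lim_n \alpha_1(A_n)^{-1} A_n \cdots))\}$. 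One shows that this splitting is continuous and that the line bundle $E^1$ is strictly invariant with a uniform cone-contraction, which gives a cone $C$ around $E^1$ mapped strictly inside itself. That $C$ avoids a projective hyperplane follows because $E^1$ and $E^2$ are transverse (complementary), so the cone around $E^1$ stays away from any hyperplane containing $E^2$-type directions.

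The \textbf{hardest step} will be producing the \emph{continuous, globally defined invariant splitting} from the mere numerical domination hypothesis, and then passing from the abstract invariant line-field to an honest open cone $C$ with the required compact-invariance and hyperplane-avoidance. The delicate point is uniformity: one must show the contraction on the cone is uniform across \emph{all} words simultaneously and that the finitely-many-components structure can be arranged. For this I would invoke the cocycle-theoretic machinery: define the invariant splitting via the limiting behavior of singular directions, prove its continuity using the exponential gap, and then thicken the dominated line-bundle to an open cone using a suitable adapted metric (a Finsler/Lyapunov metric adapted to the cocycle) in which the contraction becomes uniform in a single step. The hyperplane-avoidance and the pairwise-disjoint-closures refinement are then geometric consequences of transversality of the splitting, which can be secured by passing to a power of the cocycle if necessary — but since we are quoting Bochi--Gourmelon rather than reproving it from scratch, the cleanest route is to cite their equivalence and only sketch how the adapted-metric cone realizes the statement.
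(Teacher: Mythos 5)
The paper contains no proof of Theorem~\ref{bogo} at all: it is stated as a quoted result of Bochi and Gourmelon, with the citation standing in for the argument. So there is no internal proof to compare against, and your closing remark---that the cleanest route is simply to cite their equivalence---is in fact exactly what the paper does. Judged as a standalone argument, however, your sketch has two genuine gaps.

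First, in the reverse direction, the step ``iterating over products $A \in \A^n$ \dots\ yields the exponential gap'' does not work as stated: singular-value ratios are not supermultiplicative, so a one-step bound on $\alpha_1/\alpha_2$ for each generator cannot simply be iterated along words. The standard repair is to work projectively: compact containment $A(\overline{C}) \subset \mathrm{int}(C)$ gives a \emph{uniform} contraction factor $\kappa<1$ for the Hilbert (or projective) metric on the components of $C$, hence contraction $\kappa^n$ over words of length $n$, and a separate lemma (using that $C$ has nonempty interior and avoids a projective hyperplane) converts the projective diameter of $A(C)$ into an upper bound on $\alpha_2(A)/\alpha_1(A)$. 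Second, in the forward direction, the ``continuous, globally defined invariant splitting'' you propose to build does not exist on $\R\P^{d-1}$ for a one-step cocycle: the dominated splitting lives over the two-sided shift on $N$ symbols, with the expanding direction depending on the entire past itinerary and the stable hyperplane on the future. The set of attained top directions is typically a Cantor set in $\R\P^{d-1}$, and this is precisely why the invariant set $C$ must be a \emph{multicone} with several components (thickened around that compact set, with strict invariance and hyperplane-avoidance coming from transversality to the stable hyperplanes) rather than a cone around a line field. Relatedly, your suggestion to secure the finitely-many-components refinement ``by passing to a power of the cocycle'' is not legitimate for the statement as given: a multicone that is strictly invariant under $\A^k$ need not be invariant under $\A$ itself.
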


\begin{ques} In \cite{aby} the authors underwent a thorough analysis of the structure of $\SL^N$ in terms of the topological properties of the hyperbolic and elliptic loci. What can be said about the structure of $\mathrm{SL}(d,\R)^N$ from this perspective? For example, what is the structure of the dominated loci (containing all 1-dominated tuples)? Is there a natural analogue of semidiscreteness which can be used to describe the boundary of the dominated loci?
\end{ques}

So Theorem~\ref{bogo} tells us that, just like their two-dimensional counterparts, 1-dominated sets induce a uniformly contracting IFS on a part of $\R\P^{d-1}$. These systems were studied by Barnsley and Vince in \cite{bv}, where they made the following definition.

\begin{defn}
A set $K_\A\subset\R\P^{d-1}$ is called an \emph{attractor} for $\A\subset\mathrm{SL}(d,\R)$, if $\A(K_\A)=K_\A$ and there exists an open set $U\subset\R\P^{d-1}$, containing $K_\A$, such that for every compact $B\subset U$, we have that $\A^k(B)$ converges to $K_\A$ in the Hausdorff metric, as $k\to\infty$. The set $U$ is called a \emph{basin of attraction} of $K_\A$.
\end{defn}

This definition of an attractor is similar to the usual definition in fractal geometry. By \cite[Theorem 2]{bv} if an attractor exists, then it is unique. Moreover, Barnsley and Vince show \cite[Theorem 1]{bv}, that a set $\A$ has an attractor $K_\A$ that avoids the projection of a hyperplane (i.e. there exists a hyperplane in $\R^d$ whose projection is transverse to $K_\A$) if and only if it is 1-dominated. 

The next result shows that when $\A$ is 1-dominated and irreducible, then the attractor and the limit set coincide.

\begin{thm}\label{bv-attr}
If $\A\subset\mathrm{SL}(d,\R)$ is 1-dominated and irreducible, then $K_\A=\Lambda_\A$.
\end{thm}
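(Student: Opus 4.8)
The plan is to prove the two inclusions $\Lambda_\A\subseteq K_\A$ and $K_\A\subseteq\Lambda_\A$ separately. First I would record the structural facts that come for free. Since $\A$ is $1$-dominated, Theorem~\ref{bogo} furnishes a (multi)cone $C\subset\R\P^{d-1}$ with finitely many components whose closure is mapped into its interior by every element of $\A$ and which contains no projection of a hyperplane, while \cite[Theorem 1]{bv} guarantees that the attractor $K_\A$ exists, is unique and compact, and avoids the projection of a hyperplane. I would also note that $1$-domination forces proximality: iterating a single generator $A\in\A$ on $\overline{C}$ contracts some component into itself, producing a proximal element with attracting fixed point in $C$. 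Together with the hypothesis of irreducibility this places us under the standing assumptions, so that $\Lambda_\A$ is the unique closed, $G$-invariant, minimal subset of $\R\P^{d-1}$ and Theorem~\ref{ls=attr} applies.

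For the inclusion $\Lambda_\A\subseteq K_\A$, I would observe that the defining relation $\A(K_\A)=K_\A$ gives $A(K_\A)\subseteq K_\A$ for each $A\in\A$, hence $G\cdot K_\A\subseteq K_\A$; thus $K_\A$ is a non-empty compact forward $G$-invariant set. By a standard Zorn's-lemma argument every such set contains a minimal closed $G$-invariant subset, and by uniqueness of the minimal set this subset must be $\Lambda_\A$. Hence $\Lambda_\A\subseteq K_\A$.

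The reverse inclusion is where the real work lies. Because $\overline{C}$ is mapped compactly into $\mathrm{int}\,C$ and $\A$ contracts uniformly on the hyperplane-avoiding set $\overline{C}$, for each infinite word $\omega=(i_1,i_2,\dots)\in\I^\N$ the nested sets $A_{i_1}\cdots A_{i_k}(\overline{C})$ have diameters tending to $0$, so their intersection is a single point and $K_\A$ is exactly the set of such coding points. Fix $p\in K_\A$ with coding $\omega$, write $M_k=A_{i_1}\cdots A_{i_k}\in\A^k$, and set $\pi_k=\norm{M_k}^{-1}M_k\in\R G$. Since $\norm{\pi_k}=1$, a subsequence converges to some $\pi\in\overline{\R G}$; and since $1$-domination gives $\alpha_2(\pi_k)=\alpha_2(M_k)/\alpha_1(M_k)\to0$, every singular value of $\pi$ except the leading one vanishes, so $\pi\in\Pi_\A$. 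Thus $\P(\mathrm{Im}\,\pi)\in\Lambda_\A$, and it remains only to identify this point with $p$.

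This identification is the main obstacle, and it uses the cone once more. As $C$ contains no projection of a hyperplane, I can pick $x_0\in C$ with $x_0\notin\P(\mathrm{Ker}\,\pi)$, the kernel being the hyperplane complementary to the rank-one limit. Then $\pi_{k_j}(x_0)\to\pi(x_0)\neq0$, so in $\R\P^{d-1}$ we have $M_{k_j}(x_0)=\P(\pi_{k_j}(x_0))\to\P(\pi(x_0))=\P(\mathrm{Im}\,\pi)$, the last equality because $\pi$ has rank one. On the other hand $x_0\in\overline{C}$ implies $M_k(x_0)\to p$ by the coding description, independently of the choice of $x_0\in\overline{C}$. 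Comparing the two limits gives $p=\P(\mathrm{Im}\,\pi)\in\Lambda_\A$, whence $K_\A\subseteq\Lambda_\A$. The two inclusions together yield $K_\A=\Lambda_\A$.
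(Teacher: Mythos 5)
Your proof is correct, but it takes a genuinely different route from the paper's. The paper argues entirely through Theorem~\ref{ls=attr}: since 1-domination supplies proximal elements, $\Lambda_\A$ is the closure of the attracting fixed points of proximal elements of $G$, so it suffices to show such fixed points are dense in $K_\A$; this is done by using the definition of the attractor together with 1-domination to produce, for each $x\in K_\A$ and small $\epsilon>0$, an element $A\in G$ with $A\bigl(\overline{D(x,\epsilon)}\bigr)\subset D(x,\epsilon)$, which (by the Krein--Rutman step already used in the proof of Theorem~\ref{ls=attr}) is proximal with attracting fixed point in $D(x,\epsilon)$. You never use Theorem~\ref{ls=attr}: you get $\Lambda_\A\subseteq K_\A$ from uniqueness of the minimal set (\cite[Lemma 4.2]{BQ}) plus Zorn's lemma, and $K_\A\subseteq\Lambda_\A$ directly from the definition of the limit set, by coding points of $K_\A$ with infinite words, normalising the partial products $M_k$, and using the gap $\alpha_2(M_k)/\alpha_1(M_k)\to 0$ from 1-domination to extract a rank-one limit in $\overline{\R G}$ whose image is the coded point. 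The paper's approach buys brevity and reuse of machinery; yours buys independence from Theorem~\ref{ls=attr} and, importantly, makes \emph{both} inclusions explicit --- the paper's sketch really only establishes $K_\A\subseteq\Lambda_\A$, leaving implicit that every attracting fixed point of a proximal $A\in G$ lies in $K_\A$ (which needs a short extra argument, e.g.\ your minimality one, or that by irreducibility $K_\A$ is not contained in the repelling hyperplane of $A$ and so iterates onto the fixed point). The cost is that you lean on standard multicone facts asserted rather than proved: uniform contraction of cylinder images and the resulting coding description of $K_\A$, which itself requires uniqueness of the attractor \cite[Theorem 2]{bv}; this is the same level of rigour as the paper's own sketch. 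Two small repairs: the reason you may pick $x_0\in C\setminus\P(\mathrm{Ker}\,\pi)$ is simply that $C$ is open and non-empty while a projective hyperplane has empty interior (the ``contains no projection of a hyperplane'' property is not what is needed there); and your parenthetical claim that a component mapped compactly into itself yields proximality is shaky as stated, since Krein--Rutman needs a convex cone and a multicone component need not lift to one --- it is cleaner to note that 1-domination gives $\lim_n\bigl(\alpha_1(A^n)/\alpha_2(A^n)\bigr)^{1/n}=|\lambda_1(A)|/|\lambda_2(A)|\geq\lambda>1$, so every element of $G$ is proximal.
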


\begin{proof}
Since $\A$ is 1-dominated it contains proximal matrices. So Theorem~\ref{ls=attr} is applicable and thus it suffices to prove that the attracting fixed points of proximal elements are dense in the attractor. Let $U$ be a basin of attraction for $K_\A$ and consider $x\in K_\A$. Take $\epsilon>0$ small enough so that the closure of the open ball $D(x,\epsilon)\subset \R\P^{d-1}$ is contained in $U$. Then, by definition of the attractor and 1-domination of $\A$, there exists $A\in G$ such that $A\left(\overline{D(x,\epsilon)}\right)$ is contained in $D(x,\epsilon)$. So $A$ is a proximal matrix whose fixed point lies in $D(x,\epsilon)$. Since $\epsilon$ and $x$ were arbitrary, we can find attracting fixed points of proximal matrices arbitrarily close to any point of the attractor.
\end{proof}

Note that a positive answer to Question~\ref{irred-q} would imply that the irreducibility assumption can also be dropped from Theorem~\ref{bv-attr}.

If however a set $\A$ is not dominated then the limit set and the attractor are not necessarily the same. Consider the set $\A=\{A,B\}$, where 
\[
A=\begin{pmatrix}
2 & 0  &0\\
0 & 2 & 0\\
0 & 0& \frac{1}{4}
\end{pmatrix}
\quad \text{and}\quad 
B=\begin{pmatrix}
2 & 0 & 0\\
0 & 1 & 0\\
0 & 0 & \frac{1}{2}
\end{pmatrix}.
\]
Denote by $p\in\R\P^{2}$ the projection of the point $(1,0,0)$. Since $A$ and $B$ commute, we can easily see that any matrix in the semigroup generated by $\A$ is either $A^n$ or a proximal with the same eigenvectors as $B$ and attracting point $p$. This implies that if $\pi\in\Pi_\A$, then $\mathrm{Im}\ \pi = \mathrm{span}\{(1,0,0)\}$, which means that $\Lambda_\A=\{p\}$.\\
But $\Lambda_\A$ is \emph{not} an attractor for $\A$, since no compact neighbourhood of $p$ is contracted by $A$. Instead, the attractor $K_\A$ of $\A$ is the projection of the $x-y$ plane.\\
It is possible to modify this example so that $\A$ is irreducible, by choosing $A=\begin{pmatrix}
4E  &0\\
0& \frac{1}{4}
\end{pmatrix}$, for a suitable elliptic $E\in\SL$.

\subsection{Dimension of sets and measures} \label{dimhigher}

While the action of $\SL$ on $\R\P^1$ is conformal, the action of $\mathrm{SL}(d,\R)$ on $\R\P^{d-1}$ introduces non-conformality into the picture, and hence self-projective sets for $\A \subset \mathrm{SL}(d,\R)$ with $d \geq 3$ are more akin to self-affine sets (see e.g \cite{bhr,hr, feng}) than self-similar sets. This makes their dimension theory significantly more challenging, since the self-projective sets are composed of potentially heavily distorted copies of itself, which makes constructing covers for them quite subtle. Since the dimension theory of higher dimensional self-projective sets and the Furstenberg measures they support is still relatively limited, there are many interesting directions and open problems in this area.

A fundamental result in this direction is due to Rapaport \cite{rapaport}, who established the exact dimensionality of the Furstenberg measure in higher dimensions. 

\begin{thm} \label{rap}
Let $\A \subset \mathrm{SL}(d,\R)$ generate a strongly irreducible and proximal semigroup $G$. Let $\mu$ be fully supported on $\A$ and let $\nu$ be the Furstenberg measure \eqref{stationary}. Then $\nu$ is exact dimensional.
\end{thm}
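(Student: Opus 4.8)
The plan is to work in the symbolic model. Let $\Sigma=\A^{\N}$ carry the Bernoulli measure $\beta=\mu^{\N}$ and the left shift $\sigma$, and for $\omega=(\omega_1,\omega_2,\ldots)$ write $g_n(\omega)=\omega_1\cdots\omega_n$. By proximality and strong irreducibility, for $\beta$-almost every $\omega$ the normalised products $g_n(\omega)/\norm{g_n(\omega)}$ converge to a rank-one matrix whose image defines a point $Z(\omega)\in\R\P^{d-1}$, and the Furstenberg measure is the pushforward $\nu=Z_*\beta$. First I would note the equivariance $Z(\omega)=\omega_1\cdot Z(\sigma\omega)$, which allows the local behaviour of $\nu$ at $x=Z(\omega)$ to be read off the tail of $\omega$. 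The goal is then to prove that $\locd\nu(x)$ exists for $\nu$-almost every $x$; by a standard reduction it suffices to prove convergence of $\tfrac{1}{n}\log\nu(B(Z(\omega),r_n))$ along a geometric sequence of scales $r_n\to 0$. Crucially, constancy of the limit will come for free: once it is represented through the dynamics it is manifestly $\sigma$-invariant, hence $\beta$-almost surely constant by ergodicity of $(\Sigma,\beta,\sigma)$.

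The main step is to convert $\log\nu(B(Z(\omega),r))$ into a dynamical quantity. The projective action of $g_n(\omega)$ contracts a neighbourhood of $Z(\omega)$ at rates governed by the singular value ratios $\alpha_i(g_n)/\alpha_1(g_n)$; the coarse radial rate is $\tfrac{\alpha_2(g_n)}{\alpha_1(g_n)}\asymp e^{-n(\chi_1-\chi_2)}$, where $\chi_1>\chi_2$ are the top two Lyapunov exponents, with $\chi_1$ simple by proximality. Pulling a small ball back along $g_n$ and comparing $\nu$ with the normalised pushforwards $(g_n)_*\nu$ produces a telescoping expression for $\log\nu(B(Z(\omega),r_n))$ as an additive cocycle over $\sigma$ together with controlled error terms. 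I would then apply Kingman's subadditive ergodic theorem to the matrix norms and Maker's ergodic theorem to the remaining non-stationary terms to obtain $\beta$-almost sure convergence once the scale $r_n$ is calibrated against the singular values. This is the direct generalisation of the conformal argument underlying Theorem~\ref{HS} in the case $d=2$.

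The hard part will be the non-conformality of the action when $d\geq 3$. In contrast to the conformal case $d=2$, where a single contraction rate controls round balls, in higher dimensions $g_n$ distorts a round ball about $Z(\omega)$ into a thin ellipsoid whose semi-axes scale like the distinct ratios $\alpha_1(g_n)/\alpha_i(g_n)$. A round ball at scale $r$ therefore does not correspond to a single dynamical time, and the local dimension cannot be extracted from one cocycle. The resolution is a Ledrappier--Young type analysis: one introduces the flag of contracting subspaces determined by the limiting dynamics, decomposes $\nu$ along the induced foliations of $\R\P^{d-1}$, and shows that the conditional measures on each intermediate leaf are themselves exact dimensional with an almost surely constant dimension expressible through entropy and the exponent gaps; summing these contributions recovers the global local dimension. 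The key technical input, and the place where strong irreducibility and proximality are essential, is controlling these conditional measures and the transversality of $\nu$ to the contracting subspaces uniformly enough to run the ergodic averaging. This is precisely the step that fails for a naive conformal argument and forms the analytic core of the proof.
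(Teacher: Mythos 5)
You should first be aware that the paper contains no proof of Theorem~\ref{rap}: it is stated as a result of Rapaport and cited from \cite{rapaport}, so there is no in-paper argument to compare against. Measured instead against the proof in the cited source (whose very title announces the Ledrappier--Young route), your high-level plan is the right one: symbolic coding with the boundary map $Z$, equivariance plus ergodicity to force almost-sure constancy of the local dimension, and a Ledrappier--Young analysis to deal with non-conformality. The constancy step is essentially fine (strictly, stationarity only gives $\locd \nu(Z(\omega)) \leq \locd \nu(Z(\sigma\omega))$ a priori, since $\nu \geq p_i (A_i)_*\nu$ yields one-sided comparison, but measure preservation of $\sigma$ upgrades this sub-invariance to invariance a.e., and then ergodicity applies).

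The genuine gap is that everything hard in your proposal is named rather than done. Your second paragraph (telescoping $\log\nu(B(Z(\omega),r_n))$ along a single cocycle, Kingman plus Maker) is precisely the conformal $d=2$ argument behind Theorem~\ref{HS}, and your own third paragraph concedes it cannot work for $d\geq 3$, so it carries no weight in the case at issue. What remains is the assertion that ``one introduces the flag of contracting subspaces, decomposes $\nu$ along the induced foliations of $\R\P^{d-1}$, and shows that the conditional measures are exact dimensional.'' But unlike the self-affine setting of \cite{bhr,hr}, where the strong-stable directions give fixed linear foliations of Euclidean space, here the contracting flags are Oseledets data of the random product and depend on $\omega$: there is no fixed foliation of $\R\P^{d-1}$ to disintegrate $\nu$ over. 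Making this rigorous requires conditioning on the past on an extended (skew-product or flag-variety) space, relating $\nu$ to its projections between Grassmannians, and proving the entropy identities, dimension conservation and transversality for those projections --- which is exactly the content of Rapaport's paper and of the Feng--Hu and B\'ar\'any--Hochman--Rapaport machinery it builds on. As written, your proposal is a correct roadmap pointing at that literature, not a proof: the analytic core you label ``the key technical input'' is the theorem.
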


Rapaport also showed that $\dim \nu$ satisfies a natural upper bound analogue to the right hand side of \eqref{lyapdim} in the $\mathrm{SL}(2,\R)$ case. Let us now specialise to the case $d=3$ and state the upper bound in that setting. We must first introduce the three Lyapunov exponents of the random matrix product. For $A \in \SL$ let $\alpha_1(A) \geq \alpha_2(A) \geq \alpha_3(A)$ denote the singular values of $A$. For $i \in \{1,2,3\}$ let $\chi_i(\mu)$ denote the almost sure value of the limit
$$\chi_i(\mu)=\lim_{n \to \infty} \frac{1}{n} \log \alpha_i(X_n \cdots X_1)$$
where $(X_n)$ is a sequence of i.i.d random variables with distribution $\mu$.

The Lyapunov dimension (cf. \eqref{lyapdim}) of the Furstenberg measure $\nu$  is given by
\begin{equation}
\ld \nu=
\begin{cases}
\frac{H(\mu)}{\chi_1(\mu)-\chi_2(\mu)} &\mbox{if } 0 \leq H(\mu) \leq \chi_1(\mu)-\chi_2(\mu) \\
1+\frac{H(\mu)-(\chi_1(\mu)-\chi_2(\mu))}{\chi_1(\mu)-\chi_3(\mu)} & \mbox{if } \chi_1(\mu)-\chi_2(\mu) \leq H(\mu) \leq 2\chi_1(\mu)-\chi_2(\mu)-\chi_3(\mu)\\
2 &\mbox{if } H(\mu) \geq 2\chi_1(\mu)-\chi_2(\mu)-\chi_3(\mu) \end{cases} 
\label{lyapdim*}
\end{equation}

Rapaport \cite{rapaport} showed that \eqref{lyapdim*} is an upper bound on the dimension $\dim \nu$ under the hypothesis of Theorem \ref{rap}. It is now a natural question whether a matching lower bound can be obtained analogous to Theorem \ref{HS}. This requires new ideas to deal with the nonconformality which was not present in the $\mathrm{SL}(2,\R)$ setting. By drawing on the entropy methods which were developed for self-affine measures \cite{bhr,hr}, Theorem \ref{HS} has recently been partially generalised to Furstenberg measures  \cite{rauzyme,li2023dimension} supported on $\R\P^2$. Let $ \SL_{>0}$ denote the subset of strictly positive matrices in $\SL$. The following result is from \cite{rauzyme}.

\begin{thm} \label{higherfursty}
Suppose $\mathrm{supp} \, \mu \subset  \SL_{>0}$ satisfies the projective strong open set condition and generates a Zariski dense semigroup $G$. Then $\dim \nu=\dim_{\mathrm{LY}} \nu$. 
\end{thm}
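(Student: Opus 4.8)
The plan is to establish the lower bound $\dim\nu \geq \ld\nu$; combined with Rapaport's exact dimensionality and his matching upper bound $\dim\nu \leq \ld\nu$ (Theorem \ref{rap} together with \eqref{lyapdim*}), this yields the stated equality. Throughout, strict positivity of the matrices forces the finite set $\mathrm{supp}\,\mu$ to be $1$-dominated, so by Theorem \ref{bogo} the action on the positive cone is uniformly contracting with attracting directions uniformly transverse to the relevant hyperplanes; this is what will make the linearisation below uniform.

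First I would extract the two-scale geometry from the singular value decomposition. For a long product $A=X_n\cdots X_1$ the singular values satisfy $\tfrac{1}{n}\log\alpha_i(A)\to\chi_i(\mu)$ with $\chi_1(\mu)+\chi_2(\mu)+\chi_3(\mu)=0$, and near its attracting fixed point the projective map $A$ acts on $\R\P^2$ as a non-conformal contraction with a \emph{slow} rate $\approx e^{-n(\chi_1(\mu)-\chi_2(\mu))}$ and a \emph{fast} rate $\approx e^{-n(\chi_1(\mu)-\chi_3(\mu))}$. This is precisely the two-rate structure of a planar self-affine system, which is what makes the entropy methods of \cite{bhr,hr} available. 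Writing $\pi$ for the projection of $\R\P^2$ onto the $\R\P^1$-factor corresponding to the slow rate, a Ledrappier--Young formula expresses
\[
\dim\nu=\frac{h_1}{\chi_1(\mu)-\chi_2(\mu)}+\frac{h_2}{\chi_1(\mu)-\chi_3(\mu)},
\]
for dynamically defined entropies $0\leq h_1\leq\chi_1(\mu)-\chi_2(\mu)$ and $0\leq h_2\leq\chi_1(\mu)-\chi_3(\mu)$ with $h_1+h_2\leq H(\mu)$, where $h_1$ is the entropy carried by the projected measure $\pi_*\nu$. Comparing with \eqref{lyapdim*} shows that $\ld\nu$ is exactly the maximum of the right-hand side over admissible $(h_1,h_2)$, attained by using the available entropy to fill the slow factor first. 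Thus equality $\dim\nu=\ld\nu$ reduces to the two ``no entropy loss'' statements: (i) $\pi_*\nu$ realises $h_1=\min\{\chi_1(\mu)-\chi_2(\mu),\,H(\mu)\}$, and (ii) the fibre conditionals realise $h_2=\min\{\chi_1(\mu)-\chi_3(\mu),\,H(\mu)-h_1\}$.

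Both (i) and (ii) are governed by entropy growth under convolution, so the engine is Hochman's inverse theorem \cite{hochman1} adapted to this non-conformal projective setting via the linearisation technique of \cite{hochman-sol}. The argument is by contradiction: a drop in the dimension of $\pi_*\nu$, or in the fibre conditionals, would force the entropy of $\mu^{*n}$ to fail to grow at a positive proportion of scales, which by the inverse theorem forces the branches $\{A_\i:\i\in\I^n\}$ to concentrate, at those scales, near a proper affine or algebraic structure. The projective strong open set condition supplies the separation of distinct branches in $\R\P^2$ that plays the role of exponential separation in \cite{hochman1}, while Zariski density of $G$ guarantees that the branches cannot cluster near any proper algebraic subvariety; together these rule out the concentration demanded by the inverse theorem, contradicting the hypothesised drop and forcing the extremal values in (i) and (ii).

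The main obstacle is the non-conformality, which is absent in the $\mathrm{SL}(2,\R)$ setting of Theorem \ref{HS}. Two features make it delicate. First, the projection $\pi$ is not a fixed linear map: the fast contracting direction varies with the random cocycle, so one must work on the flag bundle (or with the derivative cocycle) and control entropy relative to a \emph{varying} family of projections, establishing the Ledrappier--Young formula and the transversality of $\pi$ to $\nu$ in this moving frame. Second, the two contraction rates must be treated simultaneously: one has to propagate the inverse theorem through the linearised maps at two different scales at once and track how entropy is shared between the slow factor and the fast fibres, ensuring that no entropy is lost in either layer. Making the linearisation uniform --- which is where $1$-domination and positivity are used decisively --- and marrying the two-scale entropy bookkeeping with the inverse theorem is the technical heart of the proof.
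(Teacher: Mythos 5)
Your plan follows exactly the route the paper indicates for this theorem: the survey itself gives no proof of Theorem \ref{higherfursty}, attributing it to \cite{rauzyme} and describing the method only as a generalisation of Theorem \ref{HS} obtained ``by drawing on the entropy methods which were developed for self-affine measures \cite{bhr,hr}''. Your outline --- Rapaport's exact dimensionality and upper bound, a Ledrappier--Young decomposition into slow and fast entropies, reduction to two ``no entropy loss'' statements, and entropy growth via Hochman's inverse theorem with linearisation, with the projective strong open set condition playing the role of exponential separation and Zariski density ruling out concentration near proper algebraic structure --- is precisely this approach, so it matches the paper's (cited) proof strategy.
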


Under the weaker assumptions that $\mathrm{supp} \, \mu$ generates a Zariski dense and Diophantine semigroup, $\dim \nu$ was shown to satisfy a modified version of \eqref{lyapdim*} where all instances of the Shannon entropy are replaced by the (more difficult to compute) Furstenberg entropy \cite[Theorem 1.10]{li2023dimension}.

It should be possible to drop the assumption of positivity in Theorem \ref{higherfursty} and replace the projective strong open set condition by freeness, the latter of which is a challenging open problem. However the Zariski density cannot be replaced by e.g. proximality and strong irreducibility, which can be seen by taking generators from the  non-Zariski dense group $SO(2,1)$ which preserves the bilinear form $B(\mathbf{x},\mathbf{y})=x_1y_1+x_2y_2 -x_3y_3$, and thus preserves the curve $C=\{\mathbf{x} \in \R\P^2: x_1^2+x_2^2=x_3^2\}$, see\cite{rauzyme} for further details. 

Theorem \ref{higherfursty} can then be used to study the dimensions of self-projective sets in $\R\P^2$. Due to the nonconformality of the projective action, a copy $A(\Lambda_\A)$ (for $A \in \A$) of the self-projective set $\Lambda_\A$ may potentially be heavily distorted and so rather than covering it by one ball of radius comparable to $\mathrm{diam}(A(\Lambda_\A))$ it may be possible to cover it much more efficiently by a larger quantity of smaller balls. This leads to a natural generalisation of the critical exponent in this non-conformal setting, which is analogous to the affinity dimension in the theory of self-affine sets.

 For $s \geq 0$ we define a function $\phi^s: \SL \to \R_{>0}$ 
\begin{equation}
\phi^s(A)=
\begin{cases}
\left(\frac{\alpha_2(A)}{\alpha_1(A)}\right)^s &\mbox{if } 0 \leq s \leq 1 \\
\frac{\alpha_2(A)}{\alpha_1(A)}\left(\frac{\alpha_3(A)}{\alpha_1(A)}\right)^{s-1} & \mbox{if } 1 \leq s \leq 2\\
\left(\frac{\alpha_2(A) \alpha_3(A)}{\alpha_1(A)^2}\right)^s &\mbox{if } s \geq 2 \end{cases} 
\label{svf}
\end{equation}
This can be seen to be a direct analogue of the singular value function in the theory of self-affine sets once one recognises that $\frac{\alpha_2(A)}{\alpha_1(A)}$ describes the weaker contraction of $A$ (on most of $\R\P^2$) and $\frac{\alpha_3(A)}{\alpha_1(A)}$ describes the stronger contraction  (on most of $\R\P^2$). 

Given $\A \subset \SL$ we define the zeta function $\zeta_\A: [0,\infty) \to [0,\infty]$ by
\begin{equation} \label{zeta}
\zeta_\A(s)= \sum_{n=1}^\infty \sum_{A \in \A^n} \phi^s(A).
\end{equation}
Finally we define the (projective) \emph{affinity dimension} $s_\A$ to be the critical exponent of this series
$$s_\A=\inf\{s\geq 0: \zeta_\A(s)<\infty\}.$$

We are now ready to apply Theorem \ref{higherfursty} to establish results concerning the dimension of self-projective sets in $\R\P^2$. The following result was obtained in  \cite{rauzyme}.

\begin{thm} \label{setthm}
Suppose a finite set $\A \subset \SL_{>0}$ satisfies the projective strong open set condition and generates a Zariski dense semigroup $G$. Then $\hd \Lambda_\A=\min\{s_\A,2\}$.
\end{thm}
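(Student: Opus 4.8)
The plan is to establish matching upper and lower bounds for $\hd \Lambda_\A$, combining a covering argument (analogous to the proof of Theorem~\ref{ST}) for the upper bound with an application of Theorem~\ref{higherfursty} via measure-theoretic approximation for the lower bound.

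\textbf{Upper bound.} First I would show that $\hd \Lambda_\A \leq \min\{s_\A, 2\}$. The bound $\hd \Lambda_\A \leq 2$ is immediate since $\Lambda_\A \subset \R\P^2$. For the bound $\hd \Lambda_\A \leq s_\A$, I would construct an efficient cover of $\Lambda_\A$ that exploits the nonconformal geometry. Since $\A$ is 1-dominated (as $\A \subset \SL_{>0}$ is uniformly hyperbolic in the appropriate higher-dimensional sense, being strictly positive) Theorem~\ref{bogo} provides a contracting multicone, and by Theorem~\ref{bv-attr} the limit set coincides with the attractor. For each word $A \in \A^n$, the image $A(C)$ of the multicone is a projective ellipse-like region whose eccentricity is governed by the singular value ratios $\alpha_2(A)/\alpha_1(A)$ and $\alpha_3(A)/\alpha_1(A)$. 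Rather than covering each such piece by a single ball, I would cover it by approximately $\alpha_1(A)/\alpha_2(A)$ balls of radius comparable to $\alpha_3(A)/\alpha_2(A)$ (in the regime $1 \leq s \leq 2$), so that the total $s$-dimensional cost of this cover is exactly $\phi^s(A)$. Summing over all words and all $n$ yields
$$
\mathcal{H}^s(\Lambda_\A) \lesssim \sum_{n=1}^\infty \sum_{A \in \A^n} \phi^s(A) = \zeta_\A(s) < \infty
$$
whenever $s > s_\A$, establishing $\hd \Lambda_\A \leq s_\A$. The key technical point is verifying that the natural cover by images of the multicone has the claimed efficiency, which amounts to a careful analysis of how the projective action of $A$ distorts a small region; the three cases in the definition of $\phi^s$ correspond to the three regimes $0 \leq s \leq 1$, $1 \leq s \leq 2$ and $s \geq 2$, and the cover must be chosen adaptively in each.

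\textbf{Lower bound.} For $\hd \Lambda_\A \geq \min\{s_\A, 2\}$, I would approximate $\Lambda_\A$ from below by self-projective sets generated by finite subsystems of the form $\A^n$, equipping each with a Furstenberg measure and applying Theorem~\ref{higherfursty}. Concretely, for a finite subset $\mathcal{B} \subset \A^*$ generating a Zariski dense semigroup that still satisfies the projective strong open set condition, any measure $\mu$ fully supported on $\mathcal{B}$ has a Furstenberg measure $\nu$ with $\dim \nu = \ld \nu$, and $\Lambda_\mathcal{B} \subseteq \Lambda_\A$ so $\hd \Lambda_\A \geq \dim \nu$. The goal is to choose $\mathcal{B}$ and the probability vector so that $\ld \nu$ approaches $\min\{s_\A, 2\}$. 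This requires relating the Lyapunov dimension \eqref{lyapdim*} to the affinity dimension $s_\A$ via a variational principle: one shows that optimizing the Lyapunov dimension over all such Bernoulli measures on subsystems recovers the critical exponent of $\zeta_\A$, in direct analogy with the self-affine theory where the affinity dimension equals the supremum of Lyapunov dimensions of self-affine measures. I would verify that the Zariski density and positivity of $\A$ are inherited by $\mathcal{B}$, and that the projective strong open set condition persists under passing to subsystems $\A^n$.

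\textbf{Main obstacle.} The hardest step will be the variational principle in the lower bound, namely showing that $\sup \ld \nu = \min\{s_\A, 2\}$ where the supremum ranges over Furstenberg measures on admissible subsystems. This is subtle because the singular value function $\phi^s$ is submultiplicative but not multiplicative, so the pressure-theoretic machinery that cleanly identifies the affinity dimension in the self-affine setting must be adapted to the projective cocycle, and one must ensure that the approximating measures can be taken to satisfy \emph{all} the hypotheses of Theorem~\ref{higherfursty} simultaneously (Zariski density, positivity, and the open set condition). By contrast, the upper bound, while requiring care in the geometric estimates, follows a relatively standard covering template once the correct adaptive cover is identified.
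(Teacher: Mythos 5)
Your overall strategy coincides with the paper's: an adaptive covering argument for the upper bound, and a lower bound obtained by approximating the affinity dimension by Lyapunov dimensions of Furstenberg measures on subsystems of $\A$ and invoking Theorem~\ref{higherfursty}. Indeed, the paper's lower bound does exactly what you outline -- it combines the variational principle of Cao, Feng and Huang with results of Morris and Sert to show that Zariski density lets one build measures on subsystems $\A^n$ whose Lyapunov dimension approaches $s_\A$ -- and your identification of this variational step as the main obstacle, together with the need to check that Zariski density, positivity and the open set condition persist for the subsystems, matches the structure of the paper's argument.

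However, your covering step contains a concrete quantitative error which, as written, would break the upper bound. The image $A(K)$ of the cone is contained in a projective ellipse with semiaxes of lengths $O_\epsilon\bigl(\alpha_2(A)/\alpha_1(A)\bigr)$ and $O_\epsilon\bigl(\alpha_3(A)/\alpha_1(A)\bigr)$; in the regime $1 \leq s \leq 2$ the correct cover uses roughly $\alpha_2(A)/\alpha_3(A)$ balls of diameter $O_\epsilon\bigl(\alpha_3(A)/\alpha_1(A)\bigr)$, with cost
\[
\frac{\alpha_2(A)}{\alpha_3(A)}\left(\frac{\alpha_3(A)}{\alpha_1(A)}\right)^s \;=\; \frac{\alpha_2(A)}{\alpha_1(A)}\left(\frac{\alpha_3(A)}{\alpha_1(A)}\right)^{s-1} \;=\; \phi^s(A).
\]
You instead propose $\alpha_1(A)/\alpha_2(A)$ balls of radius $\alpha_3(A)/\alpha_2(A)$, whose cost $\frac{\alpha_1(A)}{\alpha_2(A)}\bigl(\frac{\alpha_3(A)}{\alpha_2(A)}\bigr)^s$ is \emph{not} $\phi^s(A)$: for $\alpha_1 = t$, $\alpha_2 = 1$, $\alpha_3 = t^{-1}$ it equals $t^{1-s}$ while $\phi^s(A) = t^{1-2s}$, i.e. your cost exceeds the correct one by a factor $t^s$, which is exponentially large in $n$ for $A \in \A^n$. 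Consequently the sum over words is no longer dominated by $\zeta_\A(s)$ and finiteness for $s > s_\A$ fails; moreover your ball count need not even suffice to cover the long axis of the ellipse when $\alpha_2(A) > 1$. The fix is simply to substitute the correct count and radius. A further simplification: you do not need Theorem~\ref{bogo} or Theorem~\ref{bv-attr} at all. Since $\A \subset \SL_{>0}$, the paper works directly with the closed positive cone $K$, and strict positivity is precisely what yields a uniform $\epsilon$ such that $K$ is $\epsilon$-separated from every plane $\mathrm{span}\{u_{A,1},u_{A,2}\}$; this separation is the point on which the ellipse containment rests, and it is the step your sketch leaves implicit for a general multicone (compare the finitely-many-exceptions argument needed in the proof of Theorem~\ref{ST}).
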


\begin{proof} [Sketch of proof.]Again we utilise the singular value decomposition, which in this setting allows us to provide for each $A \in G$ a pair of orthonormal bases $\{u_{A,i}\}_{i=1}^3$ and $\{v_{A,i}\}_{i=1}^3$ where $Au_{A,i}=\alpha_i(A)v_{A,i}$. Hence the image of a subset $B \subset \R\P^2$ of diameter $r$ which is at least $\epsilon$ separated from the plane $\mathrm{span}\{u_{A,1},u_{A,2}\}$ is contained in a ``projective ellipse'' with semiaxes lengths $O_\epsilon\left(\frac{\alpha_2(A)}{\alpha_1(A)}r\right)$ and $O_\epsilon\left(\frac{\alpha_3(A)}{\alpha_1(A)}r\right)$. This ``projective ellipse'' can be covered either by one projective ball of diameter $O_\epsilon\left(\frac{\alpha_2(A)}{\alpha_1(A)}\right)$ or roughly $\frac{\alpha_2(A)/\alpha_1(A)}{\alpha_3(A)/\alpha_1(A)}$ balls of diameter $O_\epsilon\left(\frac{\alpha_3(A)}{\alpha_1(A)}\right)$. Moreover, strict positivity of $\A$ implies that there exists $\epsilon>0$ such that the positive cone $K$ is at least $\epsilon$- separated from any plane $\mathrm{span}\{u_{A,1},u_{A,2}\}$. 

Fix $\delta>0$. We begin with the case that $s_\A \in [0,1)$. We will cover $\Lambda_\A$ using the first covering strategy described above. We can find a finite subset $H \subset G$ such that: (a) $\forall A \in H$, $A \cdot K$ has diameter at most $\delta$ and (b) $\Lambda_\A$ is covered by $\bigcup_{A \in H} A \cdot K$. The $s$-dimensional $\delta$-approximate Hausdorff measure $\mathcal{H}_\delta^s(\Lambda_\A)$ satisfies
$$\mathcal{H}_\delta^s(\Lambda_\A) \leq \sum_{A \in H} |A \cdot K|^s \leq \sum_n \sum_{A \in \A^n} O_\epsilon\left( \left(\frac{\alpha_2(A)}{\alpha_1(A)}\right)^s\right)<\infty$$
provided $s>s_\A$. 

Next suppose $s_\A \in [1,2)$. We will cover $\Lambda_\A$ using the second covering strategy described above. We can find a finite subset $H \subset G$ such that: (a) $\forall A \in H$, $A \cdot K$ is contained in a projective ellipse with semiaxes lengths $O_\epsilon(\frac{\alpha_2(A)}{\alpha_1(A)})$ and $O_\epsilon(\frac{\alpha_3(A)}{\alpha_1(A)}) \approx \delta$ and (b) $\Lambda_\A$ is covered by $\bigcup_{A \in H} A \cdot K$. We cover each such ellipse with balls of diameter $\delta$. The $s$-dimensional $\delta$-approximate Hausdorff measure $\mathcal{H}_\delta^s(\Lambda_\A)$ satisfies
$$\mathcal{H}_\delta^s(\Lambda_\A) \leq\sum_n \sum_{A \in \A^n} O_\epsilon\left( \frac{\alpha_2(A)/\alpha_1(A)}{\alpha_3(A)/\alpha_1(A)} \left(\frac{\alpha_2(A)}{\alpha_1(A)}\right)^s\right)<\infty$$
provided $s>s_\A$. 

For the lower bound, we combine the variational principle of Cao, Feng and Huang \cite{cfh} and recent results of Morris and Sert \cite{ms} to show that the Zariski density of $G$ implies that we can construct measures on `subsystems'  $\A^n$ whose associated Furstenberg measures have Lyapunov dimension that approximate the affinity dimension arbitrarily well. We then apply Theorem \ref{higherfursty} to deduce that the affinity dimension can be approximated arbitrarily well from below by the Hausdorff dimension of Furstenberg measures supported on $\Lambda_\A$, completing the proof.

\end{proof}

Note that Theorem \ref{setthm} cannot be applied to the Rauzy gasket, due to the fact that \eqref{rauzy} are not strictly positive matrices. However, by employing approximation techniques similar to those used to prove Theorem \ref{CJ}, the Hausdorff dimension of the Rauzy gasket can be recovered. The following result was obtained independently in \cite{rauzyme} and \cite{jiao2023dimension,li2023dimension}.

\begin{thm}
Let $\A$ be given by \eqref{rauzy} so that $R=\Lambda_\A$ is the Rauzy gasket. Then 
$$\hd R= \min\{2,s_\A\}=s_\A=\inf \left\{s>0:  \sum_{n=1}^\infty \sum_{A \in \A^n}
\left(\frac{\alpha_2(A) \alpha_3(A)}{\alpha_1(A)^2}\right)^s<\infty\right\}.$$
\end{thm}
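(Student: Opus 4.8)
The plan is to prove the two inequalities $\hd R \le \min\{2,s_\A\}$ and $\hd R \ge \min\{2,s_\A\}$ separately, and only afterwards to simplify $s_\A$ using the determinant-one normalisation. The reason Theorem~\ref{setthm} cannot be quoted directly is twofold: the generators in \eqref{rauzy} are unipotent, so their powers grow only polynomially and $\A$ is \emph{not} $1$-dominated, and they are merely non-negative rather than strictly positive. Geometrically, the three vertices of $\Delta$ are parabolic cusps at which uniform contraction breaks down, and the analysis of these cusps is the heart of the matter.

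For the upper bound I would run the covering argument sketched for Theorem~\ref{setthm}. Via the singular value decomposition each piece $A\cdot K$ (with $K$ the positive cone, $A\in\A^n$) lies in a projective ellipse of semiaxes comparable to $\alpha_2(A)/\alpha_1(A)$ and $\alpha_3(A)/\alpha_1(A)$, and covering these ellipses by the cheaper of the two strategies gives $\mathcal{H}^s_\delta(R)\lesssim\zeta_\A(s)$, which is finite for $s>s_\A$. The one genuinely new point relative to Theorem~\ref{setthm} is that the uniform $\epsilon$-separation of the weak directions $\mathrm{span}\{u_{A,1},u_{A,2}\}$ from $K$, on which that scheme rests, degenerates at the cusps, where the weak direction limits onto a vertex of $\partial R$. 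There I would use the polynomial growth $\norm{A_i^n}=\mathrm{O}(n)$ (exactly as in the parabolic Example) to keep the cusp contribution to $\zeta_\A(s)$ summable and the cover valid, yielding $\hd R\le\min\{2,s_\A\}$.

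For the lower bound I would adapt the inducing scheme behind Theorem~\ref{CJ}, accelerating past the three cusps. Fix a multicone $M\subset\R\P^2$ lying in the interior of the positive cone and disjoint from neighbourhoods of the three vertices, and let $\A'\subset G$ be the associated first-return system, namely the products $A\in G$ that map $\overline{M}$ compactly into $M$. Each such $A$ is strictly positive and proximal, every finite $\A'_k\subset\A'$ is $1$-dominated in the sense of Theorem~\ref{bogo}, and for suitable choices $\A'_k$ generates a Zariski dense semigroup and satisfies the projective strong open set condition; crucially one shows $\hd\Lambda_{\A'}=\hd R$ and $s_{\A'}=s_\A$. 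Applying Theorem~\ref{setthm} to $\A'_k$ and combining the variational principle of \cite{cfh} with the results of Morris and Sert \cite{ms}, Zariski density lets one choose $\A'_k$ with $s_{\A'_k}\to s_\A$, and since $\Lambda_{\A'_k}\subset R$ this gives $\hd R\ge\min\{2,s_{\A'_k}\}\to\min\{2,s_\A\}$.

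The hard part is the inducing step itself: one must show that passing to the accelerated system $\A'$ changes neither the dimension of the limit set nor the critical exponent. This is a nonconformal analogue of the Mauldin--Urba\'nski theory of parabolic iterated function systems \cite{mu}, and the decisive estimates are quantitative controls on the singular values $\alpha_i(A_j^{\,n}B)$ of cusp excursions of length $n$, summed so as to confirm that the cusps contribute nothing to $\hd R$ beyond what $s_\A$ already records (compare Example~\ref{eg-nondiscr}, where slow contraction raises $s_\A$ but creates no extra dimension), together with a verification of Zariski density and separation for $\A'$. Once both inequalities hold we obtain $\hd R=\min\{2,s_\A\}$; the bound $\hd R<2$ of Avila, Hubert and Skripchenko forces $s_\A\le 2$, so $\min\{2,s_\A\}=s_\A$. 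Finally, since $\det A=1$ gives $\alpha_1(A)\alpha_2(A)\alpha_3(A)=1$ and hence $\alpha_2(A)\alpha_3(A)/\alpha_1(A)^2=\alpha_1(A)^{-3}$, substituting this relation into the affinity dimension \eqref{svf} rewrites $s_\A$ in the stated closed form, completing the proof.
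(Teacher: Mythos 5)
Your overall strategy is the one the paper actually uses: the paper also accelerates past the parabolic cusps, working with the induced alphabet $\Gamma=\{A_i^nA_j\}_{i\neq j,\, n\in\N}$, which is simultaneously conjugate to a set of strictly positive matrices and maps the cone $K$ into a fixed compact subset $K'\subset K$; the lower bound in the paper is then exactly your argument (sufficiently large finite $\Gamma'\subset\Gamma$ generate Zariski dense semigroups, Theorem~\ref{setthm} applies, and $\sup_{\Gamma'}s_{\Gamma'}=s_\Gamma=s_\A$). Where you diverge is the upper bound, and there your plan has a real problem. You propose to run the covering argument of Theorem~\ref{setthm} directly on $\A$ and to tame the cusps via the polynomial growth $\norm{A_i^n}=\mathrm{O}(n)$. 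But the failure at the cusps is not a summability issue, it is geometric: for words that are (close to) pure powers, the weak plane $\mathrm{span}\{u_{A,1},u_{A,2}\}$ meets the closure of the positive cone, so $A\cdot K$ is simply not contained in a projective ellipse with semiaxes $\asymp \alpha_2(A)/\alpha_1(A)$ and $\alpha_3(A)/\alpha_1(A)$. For instance $A_1^n\cdot\Delta$ is a triangle of diameter $\asymp 1/n$ whose width is comparable to its diameter, while $\alpha_3(A_1^n)/\alpha_1(A_1^n)\asymp n^{-2}$. So it is the cover itself, not its cost, that breaks down, and repairing it essentially forces you to subdivide the bad words until they mix, i.e.\ to induce anyway. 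The paper sidesteps this cleanly: it runs the covering argument for $\Gamma$ (where the $\epsilon$-separation is uniform) and observes that $R\setminus\Lambda_\Gamma$ is countable, hence $\hd R=\hd\Lambda_\Gamma\leq s_\Gamma$.

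The second genuine gap is your final step. The identity $\alpha_1\alpha_2\alpha_3=1$ does give $\alpha_2\alpha_3/\alpha_1^2=\alpha_1^{-3}$, but substituting this into \eqref{svf} does not yield the stated closed form: since $s_\A<2$, the relevant branch of $\phi^s$ is $\frac{\alpha_2}{\alpha_1}\left(\frac{\alpha_3}{\alpha_1}\right)^{s-1}=\alpha_2^{2-s}\alpha_1^{-(2s-1)}$, which depends on $\alpha_2$ separately and is not a function of $\alpha_1$ (equivalently, of the product $\alpha_2\alpha_3$) alone. Concretely, $\phi^s(A_1^n)\asymp n^{-(2s-1)}$ whereas $\left(\alpha_2\alpha_3/\alpha_1^2\right)^s\asymp n^{-3s}$, so the two series differ term by term, and the equality of their critical exponents is a nontrivial structural fact about Rauzy products --- one of the things the cited source papers actually have to prove, and which the survey's own sketch also leaves implicit --- not an algebraic consequence of $\det A=1$. (Invoking Avila--Hubert--Skripchenko to conclude $s_\A<2$ from $\hd R=\min\{2,s_\A\}$ is legitimate, if heavy-handed.) A smaller point: your $\A'$, defined as \emph{all} products mapping $\overline{M}$ compactly into $M$, is closed under composition, so it can be neither free nor satisfy the projective strong open set condition; you need genuine first returns, which is precisely what the paper's explicit alphabet $\Gamma=\{A_i^nA_j\}_{i\neq j,\, n\in\N}$ provides, since no element of $\Gamma$ is a prefix of another and the open set condition therefore survives the induction.
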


\begin{proof} 
For the upper bound, although $\A$ itself isn't a subset of $\SL_{>0}$, we have that the set $\Gamma=\{A_i^nA_j\}_{i \neq j, n \in \N}$ can be simultaneously conjugated to a set of strictly positive matrices. Moreover $\Gamma$ has the property that for some compact subset $K'$ of $K$, each $A \in \Gamma$ maps $A(K) \subset K'$. Hence the upper bound on $\hd \Lambda_\Gamma$ follows very similarly to Theorem \ref{setthm} and so we get $\hd R=\hd \Lambda_\Gamma=\leq s_\Gamma$ since $R \setminus \Lambda_\Gamma$ is a countable sequence of points. On the other hand, for the lower bound one can show that any sufficiently large finite subset $\Gamma' \subset\Gamma$ generates a semigroup which is Zariski dense in $\SL$, hence the lower bound from Theorem \ref{setthm} can be applied to show that $\dim \Lambda_{\Gamma'}=s_{\Gamma'}$ . Moreover, it can be shown that $\sup_{\textnormal{finite  } \Gamma' \subset \Gamma} s_{\Gamma'}=s_{\Gamma}=s_\A$ which completes the proof.
\end{proof}

\subsubsection{Problems}

\begin{ques} Can Theorem \ref{rap} be generalised to the setting where one only assumes Zariski density, the Diophantine property and freeness of $G$? Or even more generally to cover the case where $G$ is Diophantine and Zariski dense but maybe not free (where the Lyapunov dimension is modified by replacing the Shannon entropy by the random walk entropy)? Equivalently, can the Furstenberg entropy be shown to coincide with the random walk entropy under the hypothesis of \cite[Theorem 1.10]{li2023dimension}?
\end{ques}

The dimension theory of self-projective sets in higher dimensions begs for a thorough investigation. The dimension theory of self-affine sets \cite{bhr,hr} offers up some natural questions in this setup, particularly in the 1-dominated case and the ``boundary'' of the 1-domination locus.

\begin{ques}Can it be shown that the ``generic'' or ``almost sure'' formula for the Hausdorff dimension of a self projective set $\Lambda_\A$ for $\A \subset \mathrm{SL}(d,\R)$ is given by the projective affinity dimension, analogous to the way in which the affinity dimension is the almost sure value for the Hausdorff dimension of a self-affine set \cite{falconer}?  If yes, under what assumptions on the set of matrices can this formula be made ``sure''? What can be said about to ``exceptional'' attractors for which such a formula does not apply; can they be treated using different techniques and are phenomena such as ``dimension gaps'' e.g. distinct Hausdorff and box dimensions typical for these attractors? 
\end{ques}

\begin{ques} What about the dimension theory of natural stationary measures and limit sets for the action of $\A$ on the Grassmannian $\mathrm{Gr}(d,k)$?
\end{ques}

\begin{bibdiv}
\begin{biblist}

\bib{dp1}{article}{
   author={Aka, Menny},
   author={Breuillard, Emmanuel},
   author={Rosenzweig, Lior},
   author={de Saxc\'{e}, Nicolas},
   title={Diophantine properties of nilpotent Lie groups},
   journal={Compos. Math.},
   volume={151},
   date={2015},
   number={6},
   pages={1157--1188},
}

\bib{dp2}{article}{
   author={Aka, Menny},
   author={Breuillard, Emmanuel},
   author={Rosenzweig, Lior},
   author={de Saxc\'{e}, Nicolas},
   title={Diophantine approximation on matrices and Lie groups},
   journal={Geom. Funct. Anal.},
   volume={28},
   date={2018},
   number={1},
   pages={1--57},
}

\bib{aby}{article}{
   author={Avila, Artur},
   author={Bochi, Jairo},
   author={Yoccoz, Jean-Christophe},
   title={Uniformly hyperbolic finite-valued ${\rm SL}(2,\Bbb R)$-cocycles},
   journal={Comment. Math. Helv.},
   volume={85},
   date={2010},
   number={4},
   pages={813--884}
}

\bib{ahs}{article}{
   author={Avila, Artur},
   author={Hubert, Pascal},
   author={Skripchenko, Alexandra},
   title={On the Hausdorff dimension of the Rauzy gasket},
   language={English, with English and French summaries},
   journal={Bull. Soc. Math. France},
   volume={144},
   date={2016},
   number={3},
   pages={539--568},
}
	
\bib{ar}{article}{
   author={Arnoux, Pierre},
   author={Rauzy, G\'{e}rard},
   title={Repr\'{e}sentation g\'{e}om\'{e}trique de suites de complexit\'{e} $2n+1$},
   language={French, with English summary},
   journal={Bull. Soc. Math. France},
   volume={119},
   date={1991},
   number={2},
   pages={199--215},
}

\bib{as}{article}{
   author={Arnoux, Pierre},
   author={Starosta, \v{S}t\v{e}p\'{a}n},
   title={The Rauzy gasket},
   conference={
      title={Further developments in fractals and related fields},
   },
   book={
      series={Trends Math.},
      publisher={Birkh\"{a}user/Springer, New York},
   },
   date={2013},
   pages={1--23},
}
\bib{baker1}{article}{
   author={Baker, Simon},
   title={Iterated function systems with super-exponentially close
   cylinders},
   journal={Adv. Math.},
   volume={379},
   date={2021},
   pages={Paper No. 107548, 13},
}

\bib{baker2}{article}{
   author={Baker, Simon},
   title={Iterated function systems with super-exponentially close cylinders
   II},
   journal={Proc. Amer. Math. Soc.},
   volume={150},
   date={2022},
   number={1},
   pages={245--256},
}

\bib{bk}{article}{
   author={B\'{a}r\'{a}ny, Bal\'{a}zs},
   author={K\"{a}enm\"{a}ki, Antti},
   title={Super-exponential condensation without exact overlaps},
   journal={Adv. Math.},
   volume={379},
   date={2021},
   pages={Paper No. 107549, 22}
}

\bib{bv}{article}{
   author={Barnsley, Michael F.},
   author={Vince, Andrew},
   title={Real projective iterated function systems},
   journal={J. Geom. Anal.},
   volume={22},
   date={2012},
   number={4},
   pages={1137--1172},
}

\bib{bhr}{article}{
   author={B\'{a}r\'{a}ny, Bal\'{a}zs},
   author={Hochman, Michael},
   author={Rapaport, Ariel},
   title={Hausdorff dimension of planar self-affine sets and measures},
   journal={Invent. Math.},
   volume={216},
   date={2019},
   number={3},
   pages={601--659},
}

\bib{beardon}{book}{
   author={Beardon, Alan F.},
   title={The geometry of discrete groups},
   series={Graduate Texts in Mathematics},
   volume={91},
   note={Corrected reprint of the 1983 original},
   publisher={Springer-Verlag, New York},
   date={1995},
   pages={xii+337}
}

\bib{BQ}{book}{
   author={Benoist, Yves},
   author={Quint, Jean-Fran\c{c}ois},
   title={Random walks on reductive groups},
   series={Ergebnisse der Mathematik und ihrer Grenzgebiete. 3. Folge. A
   Series of Modern Surveys in Mathematics [Results in Mathematics and
   Related Areas. 3rd Series. A Series of Modern Surveys in Mathematics]},
   volume={62},
   publisher={Springer, Cham},
   date={2016},
}

\bib{bochi-g}{article}{
   author={Bochi, Jairo},
   author={Gourmelon, Nicolas},
   title={Some characterizations of domination},
   journal={Math. Z.},
   volume={263},
   date={2009},
   number={1},
   pages={221--231}
}

\bib{bourgain}{article}{
   author={Bourgain, J.},
   title={An application of group expansion to the Anderson-Bernoulli model},
   journal={Geom. Funct. Anal.},
   volume={24},
   date={2014},
   number={1},
   pages={49--62},
   issn={1016-443X},
   review={\MR{3177377}},
   doi={10.1007/s00039-014-0260-0},
}

\bib{cfh}{article}{
   author={Cao, Y.},
   author={Feng, D-J.},
author={Huang, W.},
   title={The thermodynamic formalism for sub-additive potentials},
   journal={Discrete and Continuous Dynamical Systems},
   volume={20},
   date={2008},
   number={3},
   pages={639}
}

\bib{c}{article}{
   author={Christodoulou, Argyrios},
   title={Parameter spaces of locally constant cocycles},
   journal={Int. Math. Res. Not. IMRN},
   date={2022},
   number={17},
   pages={13590--13628}
}

\bib{cthesis}{article}{
	author={Christodoulou, Argyrios},
	title={Dynamics of holomorphic function in the hyperbolic plane},
	journal={PhD Thesis, The Open University},
	date={2020},
	eprint={http://oro.open.ac.uk/70075/}
}

\bib{cj}{article}{
   author={Christodoulou, Argyrios},
   author={Jurga, Natalia},
   title={The Hausdorff dimension of self-projective sets},
   eprint={https://arxiv.org/abs/2007.06430}
}

\bib{dd}{article}{
   author={DeLeo, Roberto},
   author={Dynnikov, Ivan A.},
   title={Geometry of plane sections of the infinite regular skew polyhedron
   $\{4,6\mid 4\}$},
   journal={Geom. Dedicata},
   volume={138},
   date={2009},
   pages={51--67},
}
\bib{falconer}{article}{
  title={The Hausdorff dimension of self-affine fractals},
  author={Falconer, Kenneth J},
  booktitle={Mathematical Proceedings of the Cambridge Philosophical Society},
  volume={103},
  number={2},
  pages={339--350},
  year={1988},
  organization={Cambridge University Press}
}

\bib{feng}{article}{
  title={Dimension of invariant measures for affine iterated function systems},
  author={Feng, De-Jun},
  journal={Duke Math. J. 172, no. 4, 701--774.},
  year={2023}
}

\bib{fougeron}{article}{
  title={Dynamical properties of simplicial systems and continued fraction algorithms},
  author={Fougeron, Charles},
  journal={arXiv preprint arXiv:2001.01367},
  year={2020}
}

\bib{FrMaSt}{article}{
   author={Fried, David},
   author={Marotta, Sebastian M.},
   author={Stankewitz, Rich},
   title={Complex dynamics of M\"{o}bius semigroups},
   journal={Ergodic Theory Dynam. Systems},
   volume={32},
   date={2012},
   number={6},
   pages={1889--1929}
}

\bib{gmr}{article}{
   author={Gamburd, Alexander},
   author={Magee, Michael},
   author={Ronan, Ryan},
   title={An asymptotic formula for integer points on Markoff-Hurwitz
   varieties},
   journal={Ann. of Math. (2)},
   volume={190},
   date={2019},
   number={3},
   pages={751--809},
}

\bib{grm}{article}{
  title={Lower bounds on the dimension of the Rauzy gasket},
  author={Gutierrez-Romo, Rodolfo and Matheus, Carlos},
  journal={arXiv preprint arXiv:1902.04516},
  year={2019}
}

\bib{hinkkanen-martin}{article}{
   author={Hinkkanen, A.},
   author={Martin, G. J.},
   title={The dynamics of semigroups of rational functions. I},
   journal={Proc. London Math. Soc. (3)},
   volume={73},
   date={1996},
   number={2},
   pages={358--384}
}

\bib{hochman1}{article}{
   author={Hochman, Michael},
   title={On self-similar sets with overlaps and inverse theorems for
   entropy},
   journal={Ann. of Math. (2)},
   volume={180},
   date={2014},
   number={2},
   pages={773--822},
}

\bib{hochman2}{article}{
   author={Hochman, Michael},
   title={On self-similar sets with overlaps and inverse theorems for entropy in $\R^d$},
   eprint={https://arxiv.org/abs/1503.09043}
}
\bib{hr}{article}{
   author={Hochman, Michael},
   author={Rapaport, Ariel},
   title={Hausdorff dimension of planar self-affine sets and measures with
   overlaps},
   journal={J. Eur. Math. Soc. (JEMS)},
   volume={24},
   date={2022},
   number={7},
   pages={2361--2441},
}

\bib{hochman-sol}{article}{
   author={Hochman, Michael},
   author={Solomyak, Boris},
   title={On the dimension of Furstenberg measure for $SL_2(\mathbb{ R})$ random
   matrix products},
   journal={Invent. Math.},
   volume={210},
   date={2017},
   number={3},
   pages={815--875}
}

\bib{kv}{article}{
   author={Kaimanovich, V. A.},
   author={Vershik, A. M.},
   title={Random walks on discrete groups: boundary and entropy},
   journal={Ann. Probab.},
   volume={11},
   date={1983},
   number={3},
   pages={457--490},
}

\bib{JS}{article}{
   author={Jacques, Matthew},
   author={Short, Ian},
   title={Semigroups of isometries of the hyperbolic plane},
   journal={Int. Math. Res. Not. IMRN},
   date={2022},
   number={9},
   pages={6403--6463}
}

\bib{jiao2023dimension}{article}{
  title={On the dimension of limit sets on $\mathbb{P}(\mathbb{R}^3)$ via stationary measures: variational principles and applications},
  author={Jiao, Yuxiang}
author={Li, Jialun}
author={Pan, Wenyu}
author={Xu, Disheng},
  eprint={https://arxiv.org/pdf/2311.10262.pdf},
  year={2023}
}

\bib{rauzyme}{article}{
   author={Jurga, N.},
   title={Hausdorff dimension of the Rauzy gasket},
  eprint={https://arxiv.org/abs/2312.04999}
}

\bib{katok}{book}{
   author={Katok, Svetlana},
   title={Fuchsian groups},
   series={Chicago Lectures in Mathematics},
   publisher={University of Chicago Press, Chicago, IL},
   date={1992},
   pages={x+175}
}

\bib{KrRu1950}{article}{
   author={Krein, M. G.},
   author={Rutman, M. A.},
   title={Linear operators leaving invariant a cone in a Banach space},
   journal={Amer. Math. Soc. Translation},
   volume={1950},
   date={1950},
   number={26},
   pages={128}
}

\bib{lau-ngai}{article}{
   author={Lau, Ka-Sing},
   author={Ngai, Sze-Man},
   title={Multifractal measures and a weak separation condition},
   journal={Adv. Math.},
   volume={141},
   date={1999},
   number={1},
   pages={45--96},
}

\bib{ledrappier}{article}{
   author={Ledrappier, Fran\c{c}ois},
   title={Une relation entre entropie, dimension et exposant pour certaines
   marches al\'{e}atoires},
   language={French, with English summary},
   journal={C. R. Acad. Sci. Paris S\'{e}r. I Math.},
   volume={296},
   date={1983},
   number={8},
   pages={369--372},
}

\bib{levitt}{article}{
   author={Levitt, Gilbert},
   title={La dynamique des pseudogroupes de rotations},
   language={French, with English summary},
   journal={Invent. Math.},
   volume={113},
   date={1993},
   number={3},
   pages={633--670},
}
\bib{li2023dimension}{article}{
  title={On the dimension of limit sets on $\mathbb{P}(\mathbb{R}^3)$ via stationary measures: the theory and applications},
  author={Li, Jialun}
author={Pan, Wenyu}
author={Xu, Disheng},
  eprint={https://arxiv.org/pdf/2311.10265.pdf},
  year={2023}
}
\bib{marden}{book}{
   author={Marden, A.},
   title={Outer circles},
   note={An introduction to hyperbolic 3-manifolds},
   publisher={Cambridge University Press, Cambridge},
   date={2007},
   pages={xviii+427}
}
	
\bib{mu}{article}{
   author={Mauldin, R. D.},
   author={Urba\'{n}ski, M.},
   title={Parabolic iterated function systems},
   journal={Ergodic Theory Dynam. Systems},
   volume={20},
   date={2000},
   number={5},
   pages={1423--1447},
}

\bib{ms}{article}{
   author={Morris, I.},
   author={Sert, C.},
   title={A variational principle relating self-affine measures to self-affine sets},
  eprint={https://arxiv.org/pdf/2303.03437.pdf},
  year={2023}
}

\bib{ngai-wang}{article}{
   author={Ngai, Sze-Man},
   author={Wang, Yang},
   title={Hausdorff dimension of self-similar sets with overlaps},
   journal={J. London Math. Soc. (2)},
   volume={63},
   date={2001},
   number={3},
   pages={655--672},
}

\bib{rapaportexact}{article}{
   author={Rapaport, Ariel},
   title={Proof of the exact overlaps conjecture for systems with algebraic
   contractions},
   language={English, with English and French summaries},
   journal={Ann. Sci. \'{E}c. Norm. Sup\'{e}r. (4)},
   volume={55},
   date={2022},
   number={5},
   pages={1357--1377},
}

\bib{rapaport}{article}{
   author={Rapaport, Ariel},
   title={Exact dimensionality and Ledrappier-Young formula for the Furstenberg measure},
	eprint={https://arxiv.org/abs/2007.06326}
}

\bib{sewell}{article}{
  title={An upper bound on the dimension of the Rauzy gasket},
  author={Pollicott, Mark and Sewell, Benedict},
  journal={arXiv preprint arXiv:2110.07264},
  year={2021}
}

\bib{simon}{article}{
   author={Simon, K\'{a}roly},
   title={Overlapping cylinders: the size of a dynamically defined
   Cantor-set},
   conference={
      title={Ergodic theory of ${\bf Z}^d$ actions},
      address={Warwick},
      date={1993--1994},
   },
   book={
      series={London Math. Soc. Lecture Note Ser.},
      volume={228},
      publisher={Cambridge Univ. Press, Cambridge},
   },
   date={1996},
   pages={259--272},
}

\bib{sol-tak}{article}{
   author={Solomyak, Boris},
author={Takahashi, Yuki}
   title={Diophantine property of matrices and attractors of projective iterated function systems in $\R\P^1$},
	journal={International Mathematics Research Notices},
  volume={2021},
  number={16},
  pages={12639--12669},
  year={2021},
  publisher={Oxford University Press}
}

\bib{varju}{article}{
   author={Varj\'{u}, P\'{e}ter P.},
   title={On the dimension of Bernoulli convolutions for all transcendental
   parameters},
   journal={Ann. of Math. (2)},
   volume={189},
   date={2019},
   number={3},
   pages={1001--1011},
}

\bib{yoccoz}{article}{
   author={Yoccoz, Jean-Christophe},
   title={Some questions and remarks about ${\rm SL}(2,\bold R)$ cocycles},
   conference={
      title={Modern dynamical systems and applications},
   },
   book={
      publisher={Cambridge Univ. Press, Cambridge},
   },
   date={2004},
   pages={447--458}
}

\end{biblist}
\end{bibdiv}

\end{document}